\newcommand{\norm}[1]{\lVert #1 \rVert}
\newtheorem{theorem}{Theorem}
\newtheorem{lemma}{Lemma}
\newtheorem{remark}{Remark}
\title{On Rational Krylov and Reduced Basis Methods for Fractional Diffusion}
\author{Tobias Danczul\thanks{Institute for Analysis and Scientific Computing (TU Wien),
		Wiedner Hauptstrasse 8-10, 1040 Wien, Austria.
		\texttt{<tobias.danczul@tuwien.ac.at>}}
    \and
    Clemens Hofreither\thanks{
    Johann Radon Insitute for Computational and Applied Mathematics (RICAM),
    Altenbergerstr.~69, 4040 Linz, Austria.
    \texttt{<clemens.hofreither@ricam.oeaw.ac.at>}
}}
\begin{document}

\maketitle

\begin{abstract}
    We establish an equivalence between two classes of methods for solving
    fractional diffusion problems, namely, Reduced Basis Methods (RBM) and
    Rational Krylov Methods (RKM). In particular, we demonstrate that
    several recently proposed RBMs for fractional diffusion can be interpreted as RKMs. This
    changed point of view allows us to give convergence proofs for some
    methods where none were previously available.
    
    We also propose a new RKM for fractional diffusion problems with poles
    chosen using the best rational approximation of the function
    $x^{-s}$ in the spectral interval of the spatial discretization
    matrix. We prove convergence rates for this method and demonstrate
    numerically that it is competitive with or superior to many methods
    from the reduced basis, rational Krylov, and direct rational
    approximation classes.
    We provide numerical tests for some elliptic fractional diffusion
    model problems.
\end{abstract}

\section{Introduction}

The area of numerical methods for diffusion problems with a fractional in
space diffusion operator has seen intensive development recently. In the
present work, our interest lies in the spectral definition of the
fractional diffusion operator in bounded domains with homogeneous Dirichlet
boundary conditions.
Numerical treatment of such problems by extension to a higher-dimensional, but
local diffusion problem was proposed and analyzed in \cite{Nochetto2015}.
Several different methods \cite{Bonito2015,Aceto2019b} make use of quadrature
formulae for Dunford-Taylor or related integral representations of the
fractional power of the diffusion operator.
A reformulation of the fractional problem as a pseudo-parabolic equation and
solving it via a time-stepping scheme has been proposed in
\cite{Vabishchevich2015,Vabishchevich2016}.
Methods based on best uniform rational approximation (BURA) of certain
functions in the spectral domain were developed in
\cite{BURA:2019,Harizanov2020}.

It is remarkable that all the above-mentioned methods can viewed as rational
approximation methods, where the exact fractional power of the involved
operator is approximated by a rational function of the diffusion operator. This
unified view proposed in \cite{H:2020} has led to several interesting
ramifications for analysis and efficient implementation of these approaches.

A different class of numerical methods results from applying so-called
rational Krylov methods (cf.~\cite{Guettel:PhD,Guettel2013,Druskin2009b}) to
the solution of fractional diffusion problems \cite{Moret2018,Aceto2019}.
This approach can also be viewed as a rational approximation of the fractional
operator, but whereas the denominator of the rational function is fixed (via
selection of the poles) \textit{a priori}, the numerator is determined
automatically via Rayleigh-Ritz extraction, yielding a quasi-optimal
approximation from the rational Krylov space.

Several recently proposed numerical schemes exploit the fact that the non-local character of the
fractional operator can be circumvented at the cost of parametric solutions to
classical reaction-diffusion problems. The reduced basis method (RBM; see
\cite{Quateroni2016,Rozza2015}) is a prevalent choice for reducing the
computational effort in the evaluation of these solutions for multiple instances
of the parameter. Due to its usability and excellent convergence properties, the RBM has been applied to the extension method \cite{Antil2018}, the framework of interpolation operators \cite{DS:2019,DS:2020}, and quadrature approximations based on Dunford-Taylor calculus \cite{Bonito2020, Antil2019}; see also \cite{Gunzburger2020}. The analytical results provided by \cite{Bonito2020, DS:2019, DS:2020} underpin the experimental observations in \cite{Gunzburger2016} that the RBM has the ability to efficiently query the solution map for multiple values of the fractional exponent.

The aim of the present work is to establish a close relationship between
rational Krylov methods and reduced basis methods for fractional
diffusion problems. We will show that several reduced basis methods can be
interpreted as rational Krylov methods. This in turn allows us to apply a
strong result on quasi-optimality of rational Krylov methods to the convergence
analysis of reduced basis method, yielding novel error estimates.
In a sense, this continues the work started in \cite{H:2020}, where a unified
theoretical framework for direct rational approximation methods was proposed,
in that we now extend this unifying point of view also to rational Krylov
and reduced basis methods.

The remainder of the paper is laid out as follows:
In Section~\ref{sec:prelim}, we recall the spectral version of the
fractional diffusion problem and its discretization.
In Section~\ref{sec:methods}, we describe several classes of numerical
methods for the efficient solution of fractional diffusion problems. In
particular, we draw some parallels between rational Krylov and reduced basis
methods. Furthermore, we propose a rational Krylov method based
on the poles of the best rational approximation and analyze its convergence.
We derive some new theoretical convergence results for a reduced basis method
in Section~\ref{sec:analytical} by making use of the rational Krylov framework.
Finally some numerical experiments are given in Section~\ref{sec:numerics}.

\section{The fractional diffusion problem and its discretization}
\label{sec:prelim}

Given an open and bounded domain $\Omega \subset \mathbb R^d$,
$s \in (0,1)$, and a suitable right-hand side $b$ defined on $\Omega$,
we seek the solution $u$ of the fractional diffusion equation
\begin{equation}
    \label{eq:fracdiff}
        \mathcal L^s u = b \qquad \text{in } \Omega
\end{equation}
where $\mathcal L u = -\operatorname{div}(A \nabla u)$ is a self-adjoint,
elliptic diffusion operator with $A(x) \in \mathbb R^{d \times d}$
symmetric and uniformly positive definite, and $\mathcal L$ is supplemented
with homogeneous Dirichlet boundary conditions on
$\Gamma = \partial\Omega$.

Different definitions of fractional powers of operators in bounded
domains exist (see, e.g., \cite{Lischke2020} and its references).
In the present work, we assume the following spectral definition.
Under mild assumptions on the operator and the boundary, $\mathcal L$ admits a system of
eigenfunctions $u_j$ with corresponding eigenvalues $\widetilde\lambda_j>0$ such that
\[
    \mathcal L u_j = \widetilde\lambda_j u_j \qquad \forall j=1,2,\ldots
\]
and $(u_i, u_j) = \delta_{ij}$, where $(\cdot,\cdot)$ denotes the $L_2$-inner
product in $\Omega$. A fractional power of $\mathcal L$ can then be defined
as
\begin{equation}
    \label{eq:spectral}
    \mathcal L^s u = \sum_{j=1}^\infty \widetilde\lambda_j^s (u, u_j) u_j.
\end{equation}

In order to discretize this problem, we introduce a finite-dimensional space
$V_h \subset H^1_0(\Omega)$, for instance constructed using finite elements,
together with a suitable basis $(\varphi_j)_{j=1}^n$.
We introduce the standard stiffness and mass matrices $K$ and
$M$, respectively, as
\begin{align}
	\label{eq:matrices}
    K_{ij} = (A \nabla \varphi_j, \nabla \varphi_i),    \quad
    M_{ij} = (\varphi_j, \varphi_i)
    \qquad
    \forall i,j=1,\ldots,n,
\end{align}
and let $L = M^{-1} K \in \mathbb R^{n \times n}$.
A simple but computationally expensive way to solve problems of
the form \eqref{eq:fracdiff} is via the fractional matrix power
\begin{equation}
    \label{eq:dem}
    \mathbf u = L^{-s} \mathbf b,
\end{equation}
where $\mathbf b \in \mathbb R^n$ is the coefficient vector of the
$L_2$-projection of the right-hand side $b$ into $V_h$.
The resulting vector $\mathbf u \in \mathbb R^n$ contains the coefficients of
the discrete solution with respect to the basis $(\varphi_j)$.
In \cite{H:2020}, it was shown that this discrete formulation is
equivalent to the discrete eigenfunction method which replaces the
exact eigenfunctions and eigenvalues in \eqref{eq:spectral} with
their discrete counterparts obtained by solving the generalized
eigenvalue problem
\begin{equation}
    \label{eq:discr_eig}
    K \mathbf u_j = \lambda_j M \mathbf u_j,
    \qquad j=1,\dots,n,
\end{equation}
where we assume that $\lambda_1 \le \lambda_2 \le\ldots\le \lambda_n$.
On the other hand, in \cite{DS:2019,DS:2020} it was demonstrated that the
solution obtained by \eqref{eq:dem} is also equivalent to a number of
different interpolation constructions between the finite-dimensional
Hilbert spaces
\[
    (V_h, \norm{\cdot}_{L_2}), \ (V_h, \norm{\cdot}_{A})
\]
with argument $s \in (0,1)$, where
$\norm{u}_{A} = (A \nabla u, \nabla v)^{1/2}$ is the energy norm.

Realizing \eqref{eq:dem} exactly is too computationally
expensive if the involved matrices $K$ and $M$ are large and sparse as it
involves the computation of the entire eigensystem \eqref{eq:discr_eig}.
Therefore, a number of approximation techniques have been developed,
a few of which we will outline in the following section.

\section{Approximation methods for fractional diffusion}
\label{sec:methods}

\subsection{Rational approximation methods}
\label{sec:ratapprox}

One class of methods presupposes that we have a rational function $r$ of
degree at most $k$ which in some sense approximates the function
$z \mapsto z^{-s}$ on the spectral interval
$\Lambda = [\lambda_{\min}(L), \lambda_{\max}(L)] = [\lambda_1,\lambda_n]$.
The idea is to approximate $L^{-s}$ by $r(L)$ in \eqref{eq:dem}.
To facilitate this, assume further that $r$ has the partial fraction
decomposition
\[
    r(z) = c_0 + \sum_{j=1}^k \frac{c_j}{z - d_j}
\]
with real, nonpositive, and pairwise distinct poles $(d_j)_{j=1}^k$ and
residues $(c_j)_{j=0}^k$.
Then the application of the matrix function $r(L)$ to $\mathbf b$
is given by
\[
    \mathbf u_r := r(L) \mathbf b =
    c_0 \mathbf w_0 + \sum_{j=1}^k c_j \mathbf w_j,
    \qquad
    \mathbf w_0 = \mathbf b,
    \qquad
    \mathbf w_j = (L - d_j I)^{-1} \mathbf b, \qquad j=1,\dots,k,
\]
or equivalently
\begin{equation}
    \label{eq:shifted}
    (K - d_j M) \mathbf w_j = M \mathbf b, \qquad j=1,\dots,k.
\end{equation}
The error of the solution so obtained relative to the solution
$\mathbf u$ from \eqref{eq:dem} can be bounded directly in terms
of the approximation quality of $r$ to the function $z \mapsto z^{-s}$,
as the following result shows.

\begin{theorem}[\cite{H:2020}]
    \label{thm:rational_error}
    The solution $u_r \in V_h$ obtained by the rational approximation method
    and the solution $u \in V_h$ obtained by the discrete eigenfunction method
    satisfy the relation
    \[
        \norm{u - u_r}_{L_2(\Omega)}
        \le
        \norm{b}_{L_2(\Omega)}
        \norm{f - r}_{L_\infty(\Lambda)},
    \]
    where $f(z)=z^{-s}$.
\end{theorem}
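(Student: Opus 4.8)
The plan is to reduce the statement to an estimate on coefficient vectors and then apply the spectral calculus for the matrix $L$. First I would record that for any $v\in V_h$ with coefficient vector $\mathbf v\in\mathbb R^n$ one has $\norm{v}_{L_2(\Omega)}^2=\mathbf v^\top M\mathbf v=:\norm{\mathbf v}_M^2$, so that, writing $\mathbf u=L^{-s}\mathbf b$ and $\mathbf u_r=r(L)\mathbf b$ for the coefficient vectors of $u$ and $u_r$, the quantity to be bounded is $\norm{\mathbf u-\mathbf u_r}_M=\norm{(f-r)(L)\,\mathbf b}_M$ with $f(z)=z^{-s}$. Here both $f(L)$ and $r(L)$ are well defined on the spectrum of $L$: the eigenvalues $\lambda_1,\dots,\lambda_n$ lie in $\Lambda\subset(0,\infty)$, on which $f$ is smooth, and the poles $d_j$ of $r$ are nonpositive, hence disjoint from $\Lambda$.

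Second, I would use that $L=M^{-1}K$ is self-adjoint with respect to the inner product $(\mathbf v,\mathbf w)\mapsto\mathbf v^\top M\mathbf w$, because $K$ and $M$ are symmetric. Consequently the generalized eigenvectors $\mathbf u_j$ of \eqref{eq:discr_eig} can be taken to be $M$-orthonormal and to form a basis of $\mathbb R^n$, with $L\mathbf u_j=\lambda_j\mathbf u_j$. Expanding $\mathbf b=\sum_{j=1}^n\beta_j\mathbf u_j$, any scalar function $g$ defined on $\{\lambda_1,\dots,\lambda_n\}$ acts as $g(L)\mathbf b=\sum_{j=1}^n g(\lambda_j)\beta_j\mathbf u_j$; applying this with $g=f-r$ and invoking $M$-orthonormality gives
\[
    \norm{(f-r)(L)\,\mathbf b}_M^2
    =\sum_{j=1}^n\abs{f(\lambda_j)-r(\lambda_j)}^2\abs{\beta_j}^2
    \le\Big(\max_{1\le j\le n}\abs{f(\lambda_j)-r(\lambda_j)}\Big)^2\sum_{j=1}^n\abs{\beta_j}^2.
\]

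Third, since each $\lambda_j\in\Lambda$ the maximum is at most $\norm{f-r}_{L_\infty(\Lambda)}$, while $\sum_j\abs{\beta_j}^2=\norm{\mathbf b}_M^2$ equals the squared $L_2$-norm of the $L_2$-projection of $b$ into $V_h$ and is therefore bounded by $\norm{b}_{L_2(\Omega)}^2$. Combining the three displays yields $\norm{u-u_r}_{L_2(\Omega)}\le\norm{f-r}_{L_\infty(\Lambda)}\norm{b}_{L_2(\Omega)}$, as claimed. I do not expect a genuine obstacle here; the only point needing a little care is the justification that the matrix function $r(L)$ obtained from the partial fraction decomposition coincides with the spectral substitution $\lambda_j\mapsto r(\lambda_j)$, which is the standard consistency property of the matrix functional calculus once one knows the poles avoid the spectrum.
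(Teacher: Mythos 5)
Your argument is correct and is essentially the standard proof of this result: the paper itself gives no proof (it cites \cite{H:2020}), and the argument there is precisely your spectral one --- diagonalize $L$ in the $M$-orthonormal generalized eigenbasis, use Parseval to reduce the error to $\max_j\abs{f(\lambda_j)-r(\lambda_j)}$ times $\norm{\mathbf b}_M$, and bound the latter by $\norm{b}_{L_2(\Omega)}$ via the contractivity of the $L_2$-projection. All the steps you flag as needing care (self-adjointness of $L=M^{-1}K$ in the $M$-inner product, consistency of the partial-fraction matrix function with spectral substitution since the poles are nonpositive and hence off the spectrum) are handled correctly.
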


Conversely, any vector
$\mathbf u \in \operatorname{span}\{\mathbf b, \mathbf w_1, \dots, \mathbf w_k\}$,
where the $\mathbf w_k$ are obtained as the solutions \eqref{eq:shifted}
of shifted diffusion problems with pairwise distinct shifts $d_j$,
can be written as $r(L) \mathbf b$ with a rational function $r$
of degree at most $k$ with poles $(d_j)$. Thus many numerical approaches
for solving fractional diffusion problems which involve the solution
of such shifted problems can be recast as rational approximation
methods, as has been systematically studied in \cite{H:2020}.

\subsection{Rational Krylov methods}
\label{sec:ratkry}

A variant of direct rational approximation is given by the so-called
rational Krylov methods. Here the idea is to specify the poles
$(d_j)$ of the involved rational approximation \textit{a priori}, but
determine the coefficients $(c_j)$ by solving a reduced problem
in the so-called rational Krylov space.
Thus, we fix the poles
$(d_j)_{j=0}^k\subset\overline{\mathbb{R}}_0^- := \mathbb{R}_0^-\cup\{-\infty\}$
and introduce the associated polynomial
\begin{equation}
\label{eq:q}
    q_{k}(z) := \prod_{\substack{j=0 \\ d_j\neq -\infty}}^k (z - d_j) \in \mathcal P_{k+1},
\end{equation}
where $\mathcal P_{k+1}$ denotes the algebraic polynomials of degree at most
$k+1$. Following \cite{Guettel:PhD,Guettel2013}, we define the \emph{rational
Krylov space}
\[
    \mathcal{Q}_{k+1} := \mathcal Q_{k+1}(L, \mathbf b) :=
    q_k(L)^{-1} \mathcal K_{k+1}(L, \mathbf b) \subset \mathbb R^n,
\]
where
\[
    \mathcal K_{k+1}(L, \mathbf b)
    = \operatorname{span} \{ \mathbf b, L \mathbf b, \ldots, L^k \mathbf b \}
    = \mathcal P_k(L) \mathbf b
\]
is the standard (polynomial) Krylov space and we denote by $\mathcal P_k(L)$
the space of polynomials in $L$ of degree at most $k$.
Some fundamental properties of rational Krylov spaces are given
in the following lemma.
\begin{lemma}
    The rational Krylov space $\mathcal Q_{k+1}(L, \mathbf b)$
    has the properties
    \begin{enumerate}
        \item
            $\mathcal Q_{k+1}(L,\mathbf{b}) = \mathcal K_{k+1}(L, q_k(L)^{-1} \mathbf b)
            = \mathcal P_k(L) q_k(L)^{-1} \mathbf b$,
        \item
            if $d_j = -\infty$ for some $j\in\{0,\dots,k\}$, then $\mathbf b \in \mathcal Q_{k+1}(L,\mathbf{b})$,
        \item
            $\dim(\mathcal Q_{k+1}(L, \mathbf b)) =
             \dim(\mathcal K_{k+1}(L, \mathbf b)) =
             \min\{k+1, M\}$,
            where $M$ is the \emph{invariance index} (see \cite{Guettel:PhD}) of the Krylov space
            $\mathcal K_{k+1}(L, \mathbf b)$.
    \end{enumerate}
    \label{lem:ratkry}
\end{lemma}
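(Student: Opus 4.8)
The three claims are really statements about polynomials, so the plan is to push everything back to the polynomial Krylov space. For the first item, I would start from the definition $\mathcal Q_{k+1}(L,\mathbf b) = q_k(L)^{-1}\mathcal K_{k+1}(L,\mathbf b)$ and use that $q_k(L)^{-1}$ commutes with every polynomial in $L$ (it is itself a rational function of $L$, hence a limit of polynomials in $L$, and more concretely $L$ and $q_k(L)^{-1}$ are simultaneously diagonalizable). Since $\mathcal K_{k+1}(L,\mathbf b) = \mathcal P_k(L)\mathbf b$, applying $q_k(L)^{-1}$ gives $q_k(L)^{-1}\mathcal P_k(L)\mathbf b = \mathcal P_k(L) q_k(L)^{-1}\mathbf b = \mathcal K_{k+1}(L, q_k(L)^{-1}\mathbf b)$, which is exactly the asserted chain of equalities. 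The only subtlety is that $q_k(L)$ must be invertible, which holds because the poles $d_j$ are nonpositive (in $\overline{\mathbb R}_0^-$) while $L = M^{-1}K$ has strictly positive spectrum, so $z - d_j \neq 0$ on $\Lambda$; infinite poles simply do not contribute a factor to $q_k$ by the convention in \eqref{eq:q}.

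For the second item, suppose $d_j = -\infty$ for some index $j$. Then by construction $q_k$ has degree at most $k$ (one fewer factor than the number of finite pole slots), so $q_k(L)$ is a polynomial in $L$ of degree $\le k$, i.e.\ $q_k(L)\mathbf b \in \mathcal K_{k+1}(L,\mathbf b)$. Applying $q_k(L)^{-1}$ and using the commutation/identity from item~1, we get $\mathbf b = q_k(L)^{-1} q_k(L)\mathbf b \in q_k(L)^{-1}\mathcal K_{k+1}(L,\mathbf b) = \mathcal Q_{k+1}(L,\mathbf b)$. More cleanly: by item~1, $\mathcal Q_{k+1} = \mathcal P_k(L) q_k(L)^{-1}\mathbf b$, and since $q_k \in \mathcal P_k$ we may take the polynomial $q_k$ itself as the multiplier, yielding $q_k(L) q_k(L)^{-1}\mathbf b = \mathbf b \in \mathcal Q_{k+1}$.

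For the third item, the dimension equality $\dim \mathcal Q_{k+1}(L,\mathbf b) = \dim \mathcal K_{k+1}(L,\mathbf b)$ is immediate because $q_k(L)^{-1}$ is an invertible linear map on $\mathbb R^n$, hence preserves dimensions of subspaces. It remains to identify $\dim \mathcal K_{k+1}(L,\mathbf b)$ with $\min\{k+1, M\}$ where $M$ is the invariance index, i.e.\ the smallest integer such that $\mathcal K_M(L,\mathbf b) = \mathcal K_{M+1}(L,\mathbf b)$ (equivalently the degree of the minimal polynomial of $\mathbf b$ with respect to $L$). This is the classical fact that the iterates $\mathbf b, L\mathbf b, \dots, L^{j}\mathbf b$ are linearly independent precisely as long as $j < M$ and that once $L^M\mathbf b$ becomes dependent the space stabilizes; I would either cite \cite{Guettel:PhD} for this or give the one-line argument via the minimal polynomial.

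The main obstacle — really the only point requiring care rather than citation — is the commutation step underlying item~1: justifying that $q_k(L)^{-1}\mathcal P_k(L)\mathbf b = \mathcal P_k(L)q_k(L)^{-1}\mathbf b$ as \emph{sets}, which follows once one observes that $q_k(L)^{-1}$ is a well-defined matrix (invertibility on $\Lambda \subset (0,\infty)$) that commutes with $L$, so that for any $p \in \mathcal P_k$ we have $q_k(L)^{-1} p(L)\mathbf b = p(L)\, q_k(L)^{-1}\mathbf b$, and the correspondence $p \leftrightarrow p$ between the two representations is a bijection. Everything else is bookkeeping with the convention \eqref{eq:q} for infinite poles.
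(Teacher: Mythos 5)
Your proof is correct. The paper itself does not spell out an argument here: it simply cites \cite[Lemma~4.2]{Guettel:PhD} for the case of one infinite pole and remarks that the all-finite case is analogous. Your self-contained argument --- invertibility of $q_k(L)$ because the poles lie in $\overline{\mathbb R}_0^-$ while the spectrum of $L$ is positive, commutation of $q_k(L)^{-1}$ with every $p(L)$ to get the set identity $q_k(L)^{-1}\mathcal P_k(L)\mathbf b = \mathcal P_k(L)q_k(L)^{-1}\mathbf b$, the choice $p = q_k \in \mathcal P_k$ when some $d_j=-\infty$ to recover $\mathbf b$, and dimension preservation under the invertible map $q_k(L)^{-1}$ --- is exactly the standard reasoning behind the cited result, so you have effectively filled in the details the paper delegates to the reference rather than taken a different route.
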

\begin{proof}
See \cite[Lemma 4.2]{Guettel:PhD} for the case $d_j = -\infty$ for one $j\in\{0,\dots,k\}$. If all poles are finite, the claims are validated analogously.
\end{proof}


The connection to the rational approximation methods sketched in
Section~\ref{sec:ratapprox} is easily established.
If we let
\[
    p_i(z) := \prod_{\substack{j\neq i \\ d_j\neq -\infty}}(z - d_j) \in \mathcal P_{k},
    \qquad i=0,\dots,k,
\]
and agree on the convention
$(L-d_j I)^{-1}\mathbf{b}:= \mathbf{b}$ for $d_j = -\infty$,
we see that the vectors introduced in \eqref{eq:shifted} satisfy
\[
    \mathbf w_j = (L - d_j I)^{-1} \mathbf b = p_j(L) q_k(L)^{-1} \mathbf b
\]
and therefore $\mathbf w_j \in \mathcal{Q}_{k+1}$ due to the first property.
Thus, if the vectors $(\mathbf w_0, \dots, \mathbf w_k)$ are linearly
independent, it follows from the third property that
$\dim(\mathcal{Q}_{k+1}) = k+1 \le M$ and
\begin{align}
	\label{eq:spanQ}
    \mathcal{Q}_{k+1} = \operatorname{span} \{ \mathbf w_0, \dots, \mathbf w_k \}.
\end{align}
In other words, if the chosen poles are pairwise distinct,
the rational Krylov space is identical to the space spanned by the
solutions of the shifted problems \eqref{eq:shifted}.

An orthonormal basis for the rational Krylov space $\mathcal{Q}_{k+1}$ is typically
computed using the \emph{rational Arnoldi method} \cite{Ruhe1984,Guettel:PhD}.
This algorithm requires as its input $L$, the right-hand side $\mathbf b$ and
the poles $(d_j)_{j=1}^k$.  It entails solving shifted problems similar to
\eqref{eq:shifted} and then orthonormalizes the resulting vectors, resulting in
a matrix $W \in \mathbb R^{n \times(k+1)}$ with orthonormal columns which spans
$\mathcal{Q}_{k+1}$. For a given scalar function $f$ defined over $\Lambda$,
an approximation $\mathbf u_{k+1}$ to
the vector $f(L) \mathbf b$ within this subspace is then found via
\emph{Rayleigh-Ritz extraction}, namely
\begin{align}
	\label{eq:rkm}
	\mathbf u_{k+1} := W f(L_{k+1}) W^T \mathbf b \in\mathcal{Q}_{k+1}(L,\mathbf{b}),
	\qquad
	L_{k+1} := W^T L W \in \mathbb R^{(k+1)\times(k+1)}.
\end{align}
The matrix $L_{k+1}$ is typically much smaller than $L$, and thus $f(L_{k+1})$
can be computed, e.g., by diagonalization.
Güttel \cite{Guettel:PhD} proves that this procedure is basis-independent, that
is, $\mathbf u_{k+1}$ depends only on the space $\mathcal Q_{k+1}(L,\mathbf b)$,
not the matrix $W$ itself.
Furthermore he points out that Rayleigh-Ritz extraction is equivalent to Galerkin projection in the special case $f(z) = z^{-1}$, i.e., when solving a linear system with the matrix $L$.

The rational Krylov space $\mathcal{Q}_{k+1}$ and its basis representation $W$
depend on $L$ and $\mathbf b$, rendering the above procedure nonlinear,
but the same is true for standard Krylov space methods. In contrast, the
direct rational approximation methods described in Section~\ref{sec:ratapprox}
are linear since they are given by $\mathbf u_r = r(L) \mathbf b$ with $r$
fixed \textit{a priori}. Nevertheless, we can also find a rational representation
of this form for the rational Krylov method if we allow $r$ to depend on
the input data, as the following result shows.

\begin{theorem}
    \label{thm:krylov_interpol}
    The solution obtained by the rational Krylov method satisfies
    \[
        \mathbf u_{k+1} = r(L) \mathbf b,
    \]
    where $r = p / q_{k}$ and
    $p \in \mathcal P_k$ is a polynomial such that $r$ satisfies
    the interpolation conditions
    \[
        r(\mu_j) = f(\mu_j), \qquad j=1,\dots,k+1,
    \]
    where the \emph{rational Ritz values} $(\mu_j)_{j=1}^{k+1}$ are the
    eigenvalues of $L_{k+1}$.
\end{theorem}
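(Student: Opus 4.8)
The plan is to split the statement into the (easy) existence of a rational representation and the (substantive) identification of the interpolation nodes. For the first part, the first property of Lemma~\ref{lem:ratkry} gives $\mathcal{Q}_{k+1}(L,\mathbf b) = \mathcal P_k(L)\,q_k(L)^{-1}\mathbf b$, so any element of the rational Krylov space --- in particular $\mathbf u_{k+1}$, by \eqref{eq:rkm} --- is of the form $p(L)\,q_k(L)^{-1}\mathbf b$ with $p \in \mathcal P_k$, that is, $\mathbf u_{k+1} = r(L)\mathbf b$ with $r = p/q_k$. Everything then hinges on showing that $p$ may be chosen so that $r$ interpolates $f$ at the rational Ritz values.

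The key device is to regard $\mathcal{Q}_{k+1}$ as an \emph{ordinary} polynomial Krylov space with the shifted starting vector $\hat{\mathbf b} := q_k(L)^{-1}\mathbf b$; this is again the first property of Lemma~\ref{lem:ratkry}, which gives $\operatorname{range}(W) = \mathcal{Q}_{k+1} = \mathcal K_{k+1}(L,\hat{\mathbf b}) = \operatorname{span}\{\hat{\mathbf b}, L\hat{\mathbf b}, \dots, L^k\hat{\mathbf b}\}$. Since $W$ has orthonormal columns, $WW^T$ acts as the identity on $\mathcal{Q}_{k+1}$, hence $WW^TL^i\hat{\mathbf b} = L^i\hat{\mathbf b}$ for $0\le i\le k$. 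A short induction on $i$ (peeling off one factor of $L_{k+1} = W^T L W$ at a time and using the previous identity to absorb the resulting $WW^T$) then yields the moment-matching relations
\[
    L_{k+1}^{\,i}\,W^T\hat{\mathbf b} \;=\; W^T L^i\hat{\mathbf b} \quad (0\le i\le k+1), \qquad
    W\,L_{k+1}^{\,i}\,W^T\hat{\mathbf b} \;=\; L^i\hat{\mathbf b} \quad (0\le i\le k).
\]
Taking linear combinations gives two consequences: (i) $W\,\phi(L_{k+1})\,W^T\hat{\mathbf b} = \phi(L)\hat{\mathbf b}$ for all $\phi\in\mathcal P_k$; and (ii), since $q_k$ has degree at most $k+1$, $q_k(L_{k+1})\,W^T\hat{\mathbf b} = W^T q_k(L)\hat{\mathbf b} = W^T\mathbf b$. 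The matrix $q_k(L_{k+1})$ is invertible because its eigenvalues $q_k(\mu_j) = \prod_{d_\ell\neq-\infty}(\mu_j - d_\ell)$ cannot vanish: the rational Ritz values $\mu_j$ lie in $\Lambda\subset(0,\infty)$ while the poles $d_\ell$ are nonpositive. Hence (ii) rearranges to $W^T\hat{\mathbf b} = q_k(L_{k+1})^{-1}W^T\mathbf b$.

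Now choose $p\in\mathcal P_k$ to be the unique polynomial with $p(\mu_j) = f(\mu_j)\,q_k(\mu_j)$ for $j=1,\dots,k+1$, and set $r := p/q_k$; by construction $r$ satisfies the asserted interpolation conditions. Since $L_{k+1}$ is diagonalizable with spectrum $\{\mu_1,\dots,\mu_{k+1}\}\subset\Lambda$ (a standard property of the Rayleigh--Ritz projection of a self-adjoint operator), the scalar identity $f(\mu_j) = p(\mu_j)/q_k(\mu_j)$ lifts to the matrix identity $f(L_{k+1}) = p(L_{k+1})\,q_k(L_{k+1})^{-1}$. Inserting this into \eqref{eq:rkm} and applying (ii) followed by (i) with $\phi = p$,
\[
    \mathbf u_{k+1} = W f(L_{k+1}) W^T\mathbf b
    = W\,p(L_{k+1})\,q_k(L_{k+1})^{-1} W^T\mathbf b
    = W\,p(L_{k+1})\,W^T\hat{\mathbf b}
    = p(L)\,\hat{\mathbf b}
    = r(L)\,\mathbf b,
\]
which is the claim.

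I expect the main obstacle to be the degree bookkeeping behind the moment-matching relations --- making precise why the recursion for $L_{k+1}^{\,i}W^T\hat{\mathbf b}$ may be carried up to $i=k+1$, one step further than the recursion for $W L_{k+1}^{\,i}W^T\hat{\mathbf b}$, since it is exactly this extra degree that lets the degree-$(k+1)$ denominator $q_k$ ``commute through'' the projection in step (ii). A secondary point is the treatment of repeated rational Ritz values: if $L_{k+1}$ has multiple eigenvalues the interpolation must be read in the Hermite (confluent) sense, with the matching derivative conditions, which leaves the argument otherwise unchanged; alternatively one simply assumes the $\mu_j$ pairwise distinct, the generic case. (An essentially equivalent route expresses $f(L_{k+1})$ and $f(L)$ via the Cauchy integral and reduces everything to the resolvent case $f(z)=(\zeta - z)^{-1}$, where $r$ is the associated rational interpolant; the bookkeeping above is then hidden inside that reduction.)
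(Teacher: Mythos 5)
Your proof is correct. Note that the paper itself offers no argument for this theorem: it simply defers to G\"uttel's thesis (Theorem~4.8 there), remarking that the all-finite-poles case is analogous. What you have written is, in effect, a self-contained reconstruction of that cited result, and it follows the standard ``exactness'' route: identify $\mathcal Q_{k+1}$ with the polynomial Krylov space for $\hat{\mathbf b}=q_k(L)^{-1}\mathbf b$, establish the moment-matching relations $L_{k+1}^{\,i}W^T\hat{\mathbf b}=W^TL^i\hat{\mathbf b}$ (valid one degree further, up to $i=k+1$, than the lifted relations, precisely because the projection identity $WW^TL^i\hat{\mathbf b}=L^i\hat{\mathbf b}$ is only needed for $i\le k$ in the inductive step), push $q_k$ through the projection, and replace $f(L_{k+1})$ by $p(L_{k+1})q_k(L_{k+1})^{-1}$ for the interpolating $p$. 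The two caveats you flag are exactly the right ones: invertibility of $q_k(L_{k+1})$ rests on the Ritz values lying in $\mathbb W(L)\subset(0,\infty)$ while the poles are nonpositive (which is where the standing assumption that $L$ is self-adjoint positive definite, in the appropriate inner product, enters, and also guarantees diagonalizability of $L_{k+1}$), and confluent Ritz values force the interpolation conditions to be read in the Hermite sense --- the same implicit conventions under which the paper states the theorem. So your argument fills in what the paper leaves to the reference, with no gaps.
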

\begin{proof}
	See \cite[Theorem 4.8]{Guettel:PhD} for the case $d_j = -\infty$ for one $j = 0,\dots,k$.
    If all poles are finite, the proof follows analogously.
\end{proof}
Since the denominator $q_k$ of $r$ is fixed, $p$ is determined by the
polynomial interpolation problem
$p(\mu_j) = q_k(\mu_j) f(\mu_j)$ for $j=1,\dots,k+1$. If the rational Ritz
values $\mu_j$ are pairwise distinct, $p$ is uniquely determined by these
conditions.

Clearly, the quality of the approximation $\mathbf u_{k+1} \in \mathcal{Q}_{k+1}$ to
$f(L) \mathbf b$ depends on the rational Krylov space and therefore on a
suitable choice of the poles $(d_j)_{j=0}^k$. However, the following powerful
result shows that within this space, the approximation is quasi-optimal.
Here we write $\mathbb W(L)$ for the numerical range of $L$ which, in particular,
contains the spectrum of $L$,
and $\norm{\cdot}$ refers to the Euclidean vector norm.

\begin{theorem}
    Let $W$ be an orthonormal basis of $\mathcal{Q}_{k+1}(L,\mathbf{b})$ and $L_{k+1} := W^T L W$.
    Let $f$ be analytic in a neighborhood of $\mathbb W(L)$ and
    $\mathbf u_{k+1} = W f(L_{k+1}) W^T \mathbf b$.
    For every set $\Sigma \supseteq \mathbb W(L)$ there holds
    \[
        \norm{f(L) \mathbf b - \mathbf u_{k+1}} \le
        2 C \norm{\mathbf b} \min_{p \in \mathcal P_k} \norm{f - p/{q_k}}_{L_\infty(\Sigma)}
    \]
    with a constant $C \le 11.08$.
    If $L$ is self-adjoint, the result holds with $C=1$.
    \label{thm:rkm_opt}
\end{theorem}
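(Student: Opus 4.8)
The plan is to reduce the error bound to a stability estimate for functions of the matrices $L$ and $L_{k+1}$ on their numerical ranges, using the \emph{exactness} of the rational Krylov method on rational functions that share the prescribed denominator $q_k$.

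First I would establish exactness: if $f$ itself equals some $r = p/q_{k}$ with $p \in \mathcal P_k$, then $\mathbf u_{k+1} = r(L)\mathbf b$. This follows directly from Theorem~\ref{thm:krylov_interpol}, which writes $\mathbf u_{k+1} = \tilde r(L)\mathbf b$ with $\tilde r = \tilde p / q_k$ and $\tilde p \in \mathcal P_k$ the interpolant satisfying $\tilde p(\mu_j) = q_k(\mu_j) f(\mu_j)$ at the rational Ritz values $\mu_1,\dots,\mu_{k+1}$. Since $p(\mu_j) = q_k(\mu_j) r(\mu_j) = q_k(\mu_j) f(\mu_j)$, the polynomial $p$ solves the same interpolation problem, which has a unique solution in $\mathcal P_k$ when the $\mu_j$ are pairwise distinct; hence $\tilde p = p$, $\tilde r = r$, and $\mathbf u_{k+1} = r(L)\mathbf b$.

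Next I would split the error. The map $g \mapsto W g(L_{k+1}) W^T \mathbf b$ is linear in $g$, and the rational Krylov space $\mathcal Q_{k+1}$ and its basis $W$ depend only on $L$, $\mathbf b$ and the chosen poles, not on $f$. Fixing an arbitrary $r = p/q_k$ with $p \in \mathcal P_k$ and applying the construction to $f-r$, exactness for $r$ gives $W(f-r)(L_{k+1})W^T\mathbf b = \mathbf u_{k+1} - r(L)\mathbf b$, so that
\[
    f(L)\mathbf b - \mathbf u_{k+1} = (f-r)(L)\mathbf b - W\,(f-r)(L_{k+1})\,W^T\mathbf b .
\]
I would then bound the two terms separately. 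Because $W$ has orthonormal columns, $\norm{W} = \norm{W^T} = 1$, and because $L_{k+1} = W^T L W$ is a compression of $L$, the identity $\mathbf y^* L_{k+1}\mathbf y = (W\mathbf y)^* L (W\mathbf y)$ with $\norm{W\mathbf y} = \norm{\mathbf y}$ shows $\mathbb W(L_{k+1}) \subseteq \mathbb W(L) \subseteq \Sigma$. Assuming $q_k$ has no zero in $\Sigma$ (otherwise the asserted bound is infinite and there is nothing to prove), $f-r$ is analytic in a neighbourhood of $\mathbb W(L)$, and Crouzeix's theorem applied to both $L$ and $L_{k+1}$ gives $\norm{(f-r)(L)} \le C\,\norm{f-r}_{L_\infty(\mathbb W(L))}$ and $\norm{(f-r)(L_{k+1})} \le C\,\norm{f-r}_{L_\infty(\mathbb W(L_{k+1}))}$ with $C \le 11.08$. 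If $L$ is self-adjoint, then so is $L_{k+1}$, and the spectral theorem gives $\norm{g(A)} = \max_{\lambda\in\sigma(A)}\abs{g(\lambda)} \le \norm{g}_{L_\infty(\mathbb W(A))}$ for self-adjoint $A$, yielding the same bounds with $C=1$.

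Combining the two estimates, factoring out $\norm{\mathbf b}$, using $\mathbb W(L_{k+1}) \subseteq \mathbb W(L) \subseteq \Sigma$, and finally taking the infimum over $p \in \mathcal P_k$ yields the claim. The main obstacle is the stability estimate for matrix functions on the numerical range (Crouzeix's theorem); everything else is a short computation once exactness is in place. The points requiring a little care are repeated rational Ritz values in the exactness step, which are handled by reading the interpolation conditions in the Hermite sense, and the degenerate case in which $q_k$ vanishes somewhere in $\Sigma$, where the bound is vacuously true.
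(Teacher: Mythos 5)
The paper does not prove this theorem itself but only cites G\"uttel's thesis (Theorem 4.10) and Druskin et al., and your argument is precisely the standard proof given in those references: exactness of the Rayleigh--Ritz extraction on rational functions with denominator $q_k$, the resulting splitting $f(L)\mathbf b - \mathbf u_{k+1} = (f-r)(L)\mathbf b - W(f-r)(L_{k+1})W^T\mathbf b$, the inclusion $\mathbb W(L_{k+1})\subseteq\mathbb W(L)$, and Crouzeix's theorem (respectively the spectral theorem in the self-adjoint case). The proof is correct, and your cautionary remarks about repeated Ritz values and zeros of $q_k$ in $\Sigma$ address the only delicate points.
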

\begin{proof}
	See \cite[Theorem 4.10]{Guettel:PhD} and \cite[Proposition 3.2]{Druskin2009}.
\end{proof}

Note the close relation of this result to Theorem \ref{thm:rational_error}:
roughly, the error obtained using the rational Krylov method with given poles
$(d_j)$ is not much larger than the error obtained using the best possible
rational approximation method with a rational function $r$ having these same
poles.

\subsection{Reduced basis methods}
\label{sec:rmb}

In this section, we show that several recently proposed schemes which are based on RBMs admit a representation in the rational Krylov framework.
To make matters precise, we consider the discrete parametric reaction-diffusion equation
\begin{align}
	\label{eq:reacdiff}
	(tI+L)\mathbf{w}(t) = \mathbf{b}
\end{align}
for a prescribed right-hand side $\mathbf{b}\in\mathbb{R}^n$ and a parameter $t\in\overline{\mathbb{R}}_0^+ := \mathbb{R}_0^+ \cup\{\infty\}$ that encodes the variability of the problem. We set $\mathbf{w}(\infty) := \mathbf{b}$ by convention. The RBM seeks to approximate the manifold of solutions $(\mathbf{w}(t))_{t\in\mathbb{R}_0^+}$ in the low-dimensional space
\begin{align}
	\label{eq:reducedspace}
	\mathcal{V}_{k+1} := \mathcal{V}_{k+1}(L,\mathbf{b}) := \operatorname{span}\{\mathbf{w}(t_0),\dots,\mathbf{w}(t_k)\},
\end{align}
where $0\leq t_0 < \dots < t_k$ are particular parameters which we refer to as \textit{snapshots}\footnote{Our terminology differs from standard RBM notation, where the term \textit{snapshot} is typically employed to refer to the discrete solution $\mathbf{w}(t_j)$ instead of the parameter $t_j$ itself.} throughout this manuscript. The reduced basis analogon of the last claim in Lemma \ref{lem:ratkry} is provided in \cite[Lemma 3.5]{DS:2019}: it states that $\operatorname{dim}(\mathcal{V}_{k+1}) = k+1$ if $\mathbf{b}$ is excited by sufficiently many eigenfunctions of $L$. The reduced basis surrogate $\mathbf w_{k+1}(t) \in \mathcal V_{k+1}$ for $\mathbf{w}(t)$ is computed via Galerkin projection,
\begin{align}
	\label{eq:rbm}
    \mathbf{w}_{k+1}(t) := V(tI_{k+1} + L_{k+1})^{-1}V^T\mathbf{b} \in \mathcal{V}_{k+1}(L,\mathbf{b}),
    \qquad
    L_{k+1} := V^TLV\in \mathbb R^{(k+1)\times(k+1)},
\end{align}
where $I_{k+1}\in\mathbb{R}^{(k+1)\times(k+1)}$ denotes the identity matrix and $V\in\mathbb{R}^{n\times(k+1)}$ a matrix whose columns form an orthonormal basis of $\mathcal{V}_{k+1}(L,\mathbf{b})$. After an initial computational investment, the reduced space \eqref{eq:reducedspace} allows us to evaluate the coefficient vector of $\mathbf{w}_{k+1}(t)$ in the basis $\{\mathbf{w}(t_0),\dots,\mathbf{w}(t_k)\}$ for arbitrary $t$ with complexity only depending on $k$. Due to \eqref{eq:spanQ}, we immediately obtain the following result.
\begin{lemma}
	\label{lemma:equal_searchspace}
	Let $(t_j)_{j=0}^k\subset\overline{\mathbb{R}}_0^+$ be pairwise distinct. Then the reduced space $\mathcal{V}_{k+1}(L,\mathbf{b})$ with snapshots $(t_j)_{j=0}^k$ and the rational Krylov space $\mathcal{Q}_{k+1}(L,\mathbf{b})$ with poles $(-t_j)_{j=0}^k$ coincide.
\end{lemma}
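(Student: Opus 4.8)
The plan is to reduce the statement to the already-established identity \eqref{eq:spanQ} by matching up, term by term, the defining data of the two spaces. Both $\mathcal{V}_{k+1}(L,\mathbf{b})$ and $\mathcal{Q}_{k+1}(L,\mathbf{b})$ are defined as spans of $k+1$ vectors obtained by solving shifted linear systems, so the core of the argument is simply to observe that the shifted systems are literally the same, once one identifies the pole $d_j$ with the negative snapshot $-t_j$.

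First I would spell out the correspondence at the level of individual vectors. For a finite snapshot $t_j \in \mathbb{R}_0^+$, the reduced basis vector $\mathbf{w}(t_j)$ solves $(t_j I + L)\mathbf{w}(t_j) = \mathbf{b}$, which is exactly \eqref{eq:shifted} with shift $d_j = -t_j$: indeed $(L - d_j I)\mathbf{w}(t_j) = (L + t_j I)\mathbf{w}(t_j) = \mathbf{b}$, so $\mathbf{w}(t_j) = (L - d_j I)^{-1}\mathbf{b} = \mathbf{w}_j$ in the notation of Section~\ref{sec:ratkry}. For the exceptional parameter $t_j = \infty$, the convention $\mathbf{w}(\infty) := \mathbf{b}$ matches the convention $(L - d_j I)^{-1}\mathbf{b} := \mathbf{b}$ for $d_j = -\infty$; and since the $t_j$ are assumed pairwise distinct and ordered, at most one of them equals $\infty$, so the bijection $t_j \mapsto -t_j$ between $\overline{\mathbb{R}}_0^+$ and $\overline{\mathbb{R}}_0^-$ is consistent with the sign conventions on the pole side. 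Hence $\mathcal{V}_{k+1}(L,\mathbf{b}) = \operatorname{span}\{\mathbf{w}_0,\dots,\mathbf{w}_k\}$ with the very same vectors $\mathbf{w}_j$ that appear in \eqref{eq:spanQ}.

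It then remains to invoke \eqref{eq:spanQ}, which asserts precisely that $\mathcal{Q}_{k+1}(L,\mathbf{b}) = \operatorname{span}\{\mathbf{w}_0,\dots,\mathbf{w}_k\}$ whenever the poles are pairwise distinct and the $\mathbf{w}_j$ are linearly independent. Since $(t_j)$ pairwise distinct gives $(-t_j)$ pairwise distinct, the poles hypothesis is met. Combining the two span identities yields $\mathcal{V}_{k+1}(L,\mathbf{b}) = \mathcal{Q}_{k+1}(L,\mathbf{b})$, which is the claim.

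The one point that needs a little care — and the closest thing to an obstacle — is the linear-independence proviso that \eqref{eq:spanQ} carries with it. The paper has already cited \cite[Lemma 3.5]{DS:2019} for the fact that $\dim(\mathcal{V}_{k+1}) = k+1$ under a mild excitation assumption on $\mathbf{b}$, and by the dimension statement in Lemma~\ref{lem:ratkry} the same invariance-index condition governs $\dim(\mathcal{Q}_{k+1})$; so I would either state the equality under that standing assumption or, more cleanly, note that the equality of the two spaces holds regardless of dimension, since the span of $\{\mathbf{w}_0,\dots,\mathbf{w}_k\}$ equals $\mathcal{Q}_{k+1}$ in the rank-deficient case as well (the first property in Lemma~\ref{lem:ratkry} shows each $\mathbf{w}_j \in \mathcal{Q}_{k+1}$, and a dimension count via the invariance index closes the reverse inclusion). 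Either way the argument is short; the substance is entirely in the bookkeeping of the $-\infty$/$\infty$ conventions.
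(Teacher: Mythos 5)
Your proposal is correct and follows essentially the same route as the paper, which derives the lemma as an immediate consequence of \eqref{eq:spanQ} after identifying $\mathbf{w}(t_j)$ with the shifted solve $(L - d_j I)^{-1}\mathbf{b}$ for $d_j = -t_j$ (including the $\infty$ convention). Your extra discussion of the linear-independence proviso in \eqref{eq:spanQ} is a welcome bit of added care that the paper glosses over, but it does not change the substance of the argument.
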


In the following two subsections, we study two classes of reduced basis
methods which have been applied to the fractional diffusion problem, namely
ones based on interpolation and on quadrature, and establish their connection
to rational Krylov methods.

\subsubsection{Interpolation-based reduced basis methods}

Two different model order reduction strategies have been recently proposed in \cite{DS:2020} which couple interpolation theory with reduced basis technology. In line with \cite{DS:2019}, the (forward) fractional operator with positive exponent $s\in(0,1)$ is reinterpreted as a weighted integral over parametrized reaction-diffusion problems
\begin{align}
	\label{eq:int_repr}
	L^s \mathbf b = \frac{2\sin(\pi s)}{\pi}\int_0^\infty t^{-2s-1}(\mathbf{b} - \mathbf v(t))\, dt, \qquad\mathbf v(t) := (I+t^2L)^{-1}\mathbf{b}.
\end{align}
Invoking $\mathbf{v}(t) = t^{-2}\mathbf{w}(t^{-2})$ and the substitution $y = t^{-2}$, where we rename the substituted variable $t$ again, we observe that the integrand can be expressed in terms of the parameter family $\mathbf w(t)$ via
\begin{align*}
	L^{s}\mathbf{b} = \frac{\sin(\pi s)}{\pi}\int_0^{\infty}t^{s}(t^{-1}\mathbf{b}-\mathbf{w}(t))\, dt.
\end{align*}
Based on a selection of snapshots $(t_j)_{j=0}^k\subset\overline{\mathbb{R}}_0^+$, the integrand is approximated using a RBM, yielding
\begin{align*}
	\mathbf{u}^{\textsc{Int}}_{k+1} := \frac{\sin(\pi s)}{\pi}\int_0^{\infty}t^{s}(t^{-1}\mathbf{b} - \mathbf{w}_{k+1}(t))\, dt.
\end{align*}
As shown in \cite[Theorem 4.3]{DS:2019}, the surrogate evaluates to
\begin{align}
	\label{eq:rbm_computation}
	\mathbf{u}_{k+1}^{\textsc{Int}} = V L_{k+1}^sV^T\mathbf{b}
\end{align}
where $V$ refers to a matrix of orthonormal basis vectors of $\mathcal{V}_{k+1}$. In \cite{DS:2019} it was proven that the scheme approximates $L^s\mathbf{b}$ at exponential convergence rates. Motivated by these results, the authors of \cite{DS:2020} proposed a version of \eqref{eq:rbm_computation} for the backward operator. They confirmed experimentally that
\begin{align}
	\label{eq:generic_rbm}
	\mathbf{u}_{k+1}^{\textsc{RB}} := V f(L_{k+1})V^T\mathbf{b}
\end{align}
converges exponentially to $L^{-s}\mathbf{b}$ if $f(z) = z^{-s}$, but no rigorous proof was known so far. The following theorem provides the essential tool to close this gap in the literature and allows us to establish a connection to RKMs.
\begin{theorem}
	\label{thm:equivalence}
	Let $f$ be analytic in a neighbourhood of $\mathbb{W}(L)$ and $(t_j)_{j=0}^k\subset\overline{\mathbb{R}}_0^+$ pairwise distinct. Then the reduced basis approximation $\mathbf{u}^{\textsc{RB}}_{k+1}$ with snapshots $(t_j)_{j=0}^k\subset\overline{\mathbb{R}}_0^+$ coincides with the rational Krylov approximation \eqref{eq:rkm} with poles $(-t_j)_{j=0}^k$.
\end{theorem}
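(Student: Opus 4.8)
The plan is to show that both the reduced basis approximation $\mathbf{u}^{\textsc{RB}}_{k+1}$ and the rational Krylov approximation $\mathbf{u}_{k+1}$ are obtained by the \emph{same} construction applied to the \emph{same} subspace, so that they must coincide. First I would invoke Lemma~\ref{lemma:equal_searchspace}: since the snapshots $(t_j)_{j=0}^k$ are pairwise distinct, the reduced space $\mathcal{V}_{k+1}(L,\mathbf{b})$ with these snapshots equals the rational Krylov space $\mathcal{Q}_{k+1}(L,\mathbf{b})$ with poles $(-t_j)_{j=0}^k$. Hence any orthonormal basis of the one is an orthonormal basis of the other; in particular the matrix $V$ appearing in \eqref{eq:generic_rbm} and the matrix $W$ appearing in \eqref{eq:rkm} span the same column space.

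Next I would compare the two defining formulas: the reduced basis surrogate is $\mathbf{u}^{\textsc{RB}}_{k+1} = V f(L_{k+1})V^T\mathbf{b}$ with $L_{k+1} = V^T L V$, while the rational Krylov surrogate is $\mathbf{u}_{k+1} = W f(\widetilde{L}_{k+1})W^T\mathbf{b}$ with $\widetilde{L}_{k+1} = W^T L W$. These are literally the same Rayleigh--Ritz extraction formula applied to the same space, merely written with two different orthonormal bases. The key fact to cite here is Güttel's basis-independence result, already recalled in the text immediately after \eqref{eq:rkm}: the Rayleigh--Ritz approximant $W f(W^T L W)W^T\mathbf{b}$ depends only on the subspace $\operatorname{colspan}(W)$, not on the particular orthonormal basis $W$. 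Since $\operatorname{colspan}(V) = \mathcal{V}_{k+1} = \mathcal{Q}_{k+1} = \operatorname{colspan}(W)$, the two approximants are equal. The hypothesis that $f$ is analytic in a neighbourhood of $\mathbb{W}(L)$ is needed only to ensure $f(L_{k+1})$ is well defined, since $\mathbb{W}(L_{k+1}) \subseteq \mathbb{W}(L)$ by the variational characterisation of the numerical range.

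One technical point I would be careful about is whether the spanning set $\{\mathbf{w}(t_0),\dots,\mathbf{w}(t_k)\}$ in \eqref{eq:reducedspace} is genuinely a basis, i.e.\ whether $\dim(\mathcal{V}_{k+1}) = k+1$; if $\mathbf{b}$ is not excited by enough eigenfunctions of $L$, the dimension drops and one must pass to $\min\{k+1, M\}$ on both sides. Under the stated assumptions Lemma~\ref{lemma:equal_searchspace} already absorbs this, since it asserts equality of the two spaces regardless of their common dimension, and basis-independence of Rayleigh--Ritz extraction holds for any orthonormal basis of whatever the common space turns out to be. So the argument is robust: \emph{equality of search spaces} (Lemma~\ref{lemma:equal_searchspace}) plus \emph{basis-independence of Rayleigh--Ritz} (Güttel) gives the claim. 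I do not anticipate a serious obstacle; the only thing to get right is citing these two ingredients cleanly and noting that both reduction formulas are instances of one and the same projected-function construction.
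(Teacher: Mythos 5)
Your argument is correct and is essentially identical to the paper's proof: both reduce the claim to the equality of search spaces via Lemma~\ref{lemma:equal_searchspace} combined with G\"uttel's basis-independence of Rayleigh--Ritz extraction. The extra remarks on well-definedness of $f(L_{k+1})$ and on the dimension of $\mathcal{V}_{k+1}$ are sound but not needed beyond what the paper states.
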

\begin{proof}
    As pointed out by G\"uttel in \cite[Lemma 3.3]{Guettel:PhD}, the rational Krylov approximation is independent of the choice of the particular basis. In view of \eqref{eq:rkm} and \eqref{eq:generic_rbm}, it thus suffices to verify that the corresponding search spaces $\mathcal Q_{k+1}(L,\mathbf{b})$ and $\mathcal V_{k+1}(L,\mathbf{b})$ coincide. This is true due to Lemma \ref{lemma:equal_searchspace}.
\end{proof}

The second method presented in \cite{DS:2020}, also referred to as dual reduced basis approximation, follows a similar idea but is based on $L^{-1}$. Due to Theorem 2.2, 2.3, and Lemma 2.7 in \cite{DS:2020}, the negative fractional operator can be expressed as
\[
	L^{-s}\mathbf{b} = \frac{2\sin(\pi s)}{\pi}\int_0^\infty t^{-2s-1}(L^{-1}\mathbf{b} - (L^{-1}+t^2I)^{-1}L^{-1}\mathbf{b}) \, dt.
\]
Utilizing $y = t^{-2}$ while renaming the substituted variable $t$ again, we obtain
\begin{align*}
	L^{-s}\mathbf{b} &= \frac{\sin(\pi s)}{\pi}\int_0^\infty t^s(t^{-1}L^{-1}\mathbf{b} - (tL^{-1} + I)^{-1}L^{-1}\mathbf{b})\, dt \\
	&= \frac{\sin(\pi s)}{\pi}\int_0^\infty t^s(t^{-1}L^{-1}\mathbf{b} - \mathbf{w}(t))\, dt.
\end{align*}
The latter is again approximated utilizing reduced basis technology with prescribed snapshots $(t_j)_{j=0}^k\subset\overline{\mathbb{R}}_0^+$ by means of
\begin{align}
	\label{eq:dualrbm}
	\mathbf{u}^{\textsc{Dual}}_{k+1} := \frac{\sin(\pi s)}{\pi}\int_0^\infty t^s(t^{-1}L^{-1}\mathbf{b} - \mathbf{w}_{k+1}(t))\, dt.
\end{align}
In \cite[Theorem 3.4]{DS:2020} it has been shown that \eqref{eq:dualrbm} can be computed via
\begin{align}
	\label{eq:dualcomputation}
	\mathbf{u}^{\textsc{Dual}}_{k+1}  = L^{-1}V L_{*,k+1}^{s-1}V^T\mathbf{b}, \qquad  L_{*,k+1} := V^TL^{-1}V.
\end{align}
If $t_j = \infty$ for one $j\in\{0,\dots,k\}$, the surrogate can be interpreted as a post-processed rational Krylov approximation as follows.
\begin{theorem}
	Let $s\in(0,1)$, $f(z) = z^{s-2}$, $\mathbf{u}_{k+1}^{\textsc{Dual}}$ the dual reduced basis approximation \eqref{eq:dualrbm} with snapshots $(t_j)_{j=0}^k$, $W$ an orthonormal basis of $\mathcal{Q}_{k+1}(L^{-1},L^{-1}\mathbf{b})$ with poles $d_j = -t_j^{-1}$, $L_{k+1} = W^TLW$, and $\mathbf{u}_{k+1} = Wf(L_{*,k+1})W^TL^{-1}\mathbf{b}$. Assume $t_j = \infty$ for one $j\in\{0,\dots,k\}$, such that $d_j = -1 / \infty = 0$. Then there holds
	\begin{align*}
		\mathbf{u}_{k+1}^{\textsc{Dual}} = L^{-1}\mathbf{u}_{k+1}.
	\end{align*}
\end{theorem}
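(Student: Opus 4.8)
The plan is to reduce the asserted identity to two ingredients. First, the reduced basis search space $\mathcal{V}_{k+1}(L,\mathbf{b})$ built from the snapshots $(t_j)_{j=0}^k$ coincides with the rational Krylov space $\mathcal{Q}_{k+1}(L^{-1},L^{-1}\mathbf{b})$ with poles $(d_j)_{j=0}^k$, $d_j=-t_j^{-1}$. Second, once the two spaces agree, a short algebraic manipulation turns $L^{-1}\mathbf{u}_{k+1}$ into the representation \eqref{eq:dualcomputation} of $\mathbf{u}^{\textsc{Dual}}_{k+1}$. Throughout I write $V\in\mathbb{R}^{n\times(k+1)}$ for an orthonormal basis of $\mathcal{V}_{k+1}(L,\mathbf{b})$, so that by \eqref{eq:dualcomputation}
\[
    \mathbf{u}^{\textsc{Dual}}_{k+1} = L^{-1} V (V^T L^{-1} V)^{s-1} V^T \mathbf{b},
\]
and $W\in\mathbb{R}^{n\times(k+1)}$ for an orthonormal basis of $\mathcal{Q}_{k+1}(L^{-1},L^{-1}\mathbf{b})$, so that (with $L_{*,k+1}:=W^T L^{-1} W$ understood in analogy with \eqref{eq:dualcomputation}, and $f(z)=z^{s-2}$)
\[
    \mathbf{u}_{k+1} = W (W^T L^{-1} W)^{s-2} W^T L^{-1}\mathbf{b}.
\]

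For the first ingredient I would use the identity $\mathbf{w}(t) = (tL^{-1}+I)^{-1}L^{-1}\mathbf{b}$ already exploited in the derivation of \eqref{eq:dualrbm}. For a finite snapshot $t_j\in(0,\infty)$, factoring $t_jL^{-1}+I = t_j(L^{-1}-d_j I)$ with $d_j = -t_j^{-1}$ gives $\mathbf{w}(t_j) = t_j^{-1}(L^{-1}-d_j I)^{-1}L^{-1}\mathbf{b}$; for $t_j = 0$ one has $\mathbf{w}(0)=L^{-1}\mathbf{b}$, which is precisely the vector attached to the pole $d_j=-\infty$ under the paper's convention; and for the snapshot $t_j=\infty$ --- which, since $0\le t_0<\dots<t_k$, must be $t_k$ --- one has $\mathbf{w}(\infty)=\mathbf{b}=(L^{-1})^{-1}(L^{-1}\mathbf{b})=(L^{-1}-0\cdot I)^{-1}L^{-1}\mathbf{b}$, the vector attached to the pole $d_k=0$. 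Hence, up to nonzero scalar factors, the snapshots $\mathbf{w}(t_0),\dots,\mathbf{w}(t_k)$ are exactly the shifted solves $(L^{-1}-d_j I)^{-1}L^{-1}\mathbf{b}$ spanning $\mathcal{Q}_{k+1}(L^{-1},L^{-1}\mathbf{b})$ according to \eqref{eq:spanQ}; using that the $(t_j)$ are pairwise distinct (hence so are the $d_j$) and that $\mathbf{b}$ is excited by sufficiently many eigenfunctions of $L$ (equivalently of $L^{-1}$), so that both spaces have dimension $k+1$, I would conclude $\mathcal{V}_{k+1}(L,\mathbf{b}) = \mathcal{Q}_{k+1}(L^{-1},L^{-1}\mathbf{b}) =: \mathcal{X}$.

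For the second ingredient, since $V$ and $W$ are orthonormal bases of the same $(k+1)$-dimensional space $\mathcal{X}$, there is an orthogonal $Q\in\mathbb{R}^{(k+1)\times(k+1)}$ with $W=VQ$, hence $W^T L^{-1} W = Q^T(V^T L^{-1} V)Q$, and since $V^T L^{-1} V$ is symmetric positive definite, functional calculus gives $(W^T L^{-1} W)^{s-2} = Q^T(V^T L^{-1} V)^{s-2}Q$ and therefore $W(W^T L^{-1} W)^{s-2}W^T = V(V^T L^{-1} V)^{s-2}V^T$ (this is just the basis-independence of the rational Krylov approximation, cf.~\cite[Lemma 3.3]{Guettel:PhD}). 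Consequently $L^{-1}\mathbf{u}_{k+1} = L^{-1}V(V^T L^{-1} V)^{s-2}V^T L^{-1}\mathbf{b}$. Since $L^{-1}$ is invertible, $(V^T L^{-1} V)^{s-2}$ is invertible, and $V$ has full column rank, I would then check that $\mathbf{u}^{\textsc{Dual}}_{k+1} = L^{-1}\mathbf{u}_{k+1}$ is equivalent to
\[
    (V^T L^{-1} V)\, V^T \mathbf{b} = V^T L^{-1}\mathbf{b},
    \qquad\text{i.e.}\qquad
    V^T L^{-1}(I - V V^T)\mathbf{b} = 0 .
\]
Here $VV^T$ is the orthogonal projector onto $\mathcal{X}$, and $\mathbf{b}=\mathbf{w}(\infty)=\mathbf{w}(t_k)\in\mathcal{V}_{k+1}(L,\mathbf{b})=\mathcal{X}$, so $(I-VV^T)\mathbf{b}=0$ and the identity holds.

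The step I expect to be the main obstacle is establishing $\mathcal{V}_{k+1}(L,\mathbf{b})=\mathcal{Q}_{k+1}(L^{-1},L^{-1}\mathbf{b})$ with the correct matching of poles: one has to handle the conventions for the snapshot $t_j=\infty$ and the pole $d_k=0$ (and, if present, for $t_0=0$ and $d_0=-\infty$) carefully, and identify the reaction-diffusion snapshots with the right nonzero rescalings of the canonical rational-Krylov generating vectors. It is exactly the hypothesis $t_k=\infty$ --- equivalently $d_k=0$ --- that forces $\mathbf{b}$ into the common search space $\mathcal{X}$, which is what makes the cancellation in the second step work; without it the term $(I-VV^T)\mathbf{b}$ need not vanish.
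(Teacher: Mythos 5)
Your proposal is correct and follows essentially the same route as the paper: first identifying $\mathcal{V}_{k+1}(L,\mathbf{b})$ with $\mathcal{Q}_{k+1}(L^{-1},L^{-1}\mathbf{b})$ via the same chain of span identities, then invoking basis-independence and the fact that the snapshot $t_j=\infty$ forces $\mathbf{b}=VV^T\mathbf{b}$, so that $V^TL^{-1}\mathbf{b}=L_{*,k+1}V^T\mathbf{b}$ and the exponents collapse from $s-2$ to $s-1$. Your reformulation of the final step as $V^TL^{-1}(I-VV^T)\mathbf{b}=0$ is just a repackaging of the paper's computation, and your more explicit bookkeeping of the $t_j=\infty$ versus $d_j=0$ conventions is a welcome clarification rather than a different argument.
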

\begin{proof}
	We deduce
	\begin{align*}
		\mathcal{V}_{k+1}(L,\mathbf{b}) &= \operatorname{span}\{(t_0I+L)^{-1}\mathbf{b},\dots, (t_kI+L)^{-1}\mathbf{b}\} \\
		&= \operatorname{span}\{(t_0L^{-1}+I)^{-1}L^{-1}\mathbf{b},\dots, (t_kL^{-1}+I)^{-1}L^{-1}\mathbf{b}\} \\
		&= \operatorname{span}\{(L^{-1}+t_0^{-1}I)^{-1}L^{-1}\mathbf{b},\dots, (L^{-1}+t_k^{-1}I)^{-1}L^{-1}\mathbf{b}\},
	\end{align*}
    which affirms that the reduced space $\mathcal{V}_{k+1}(L,\mathbf{b})$ with snapshots $(t_j)_{j=0}^k$ coincides with the rational Krylov space $\mathcal{Q}_{k+1}(L^{-1},L^{-1}\mathbf{b})$ with poles $d_j = -t_j^{-1}$, $j=0,\dots,k$. Let w.l.o.g.\ $t_0 = \infty$, or equivalently, $d_0 = 0$. Then, by definition, $\mathbf{b}\in\mathcal{Q}_{k+1}(L^{-1}, L^{-1}\mathbf{b})$ such that $\mathbf{b} = WW^T\mathbf{b} = VV^T\mathbf{b}$. Since $\mathbf{u}_{k+1}$ is independent of the particular basis, we have
	\begin{align*}
		\textbf{u}_{k+1} = V L_{*,k+1}^{s-2}V^TL^{-1}VV^T\mathbf{b}
		= V L_{*,k+1}^{s-2}L_{*,k+1}V^T\mathbf{b} = V L_{*,k+1}^{s-1}V^T\mathbf{b} = L\mathbf{u}_{k+1}^{\textsc{Dual}}.
	\end{align*}
    This yields $\mathbf{u}_{k+1}^{\textsc{Dual}} = L^{-1}\mathbf{u}_{k+1}$ as claimed.
\end{proof}

\begin{remark}
	From the rational Krylov perspective, a more natural approach to approximate $L^{-s}\mathbf{b}$ would be to directly extract the surrogate from $\mathcal{Q}_{k+1}(L^{-1},L^{-1}\mathbf{b})$ using the poles $d_0 = 0$, $d_j = -t_j^{-1}$ for $j\geq 1$, and $f(z) = z^{s-1}$, or equivalently, $\mathcal{Q}_{k+1}(L^{-1},\mathbf{b})$ with $d_0 = -\infty$, $d_j = -t_j^{-1}$ for $j\geq 1$, and $f(z) = z^{s}$. In this way, the post-processing step, i.e., the final multiplication with $L^{-1}$, could be avoided.
\end{remark}

\subsubsection{Quadrature-based reduced basis methods}

Based on the well-known Dunford-Taylor integral representation
\begin{equation}
    \label{eq:balakrishnan}
	\mathcal{L}^{-s} = \frac{\sin(\pi s)}{\pi}\int_0^\infty t^{-s}(t\operatorname{I} + \mathcal{L})^{-1}\, dt
\end{equation}
for arbitrary positive definite operators $\mathcal{L}$ whose domain is contained in a Hilbert space, Bonito and Pasciak \cite{Bonito2015} presented an exponentially convergent sinc quadrature approximation for $L^{-s}\mathbf{b}$. Using the substitution $y = \ln t$, the method can be summarized as
\begin{align}
	\label{eq:sincappr}
	L^{-s}\mathbf{b} = \frac{\sin(\pi s)}{\pi}\int_{-\infty}^\infty e^{(1-s)y}\mathbf{w}(e^y)\,dy \approx \frac{k_*\sin(\pi s)}{\pi}\sum_{j = -M_s}^{N_s} e^{(1-s)y_j} \mathbf{w}(e^{y_j}),
\end{align}
where $k_*>0$ is a parameter controlling the accuracy of the quadrature, $y_j = jk_*$, and
\begin{align}\label{eq:quadparam}
	M_s = \left\lceil\frac{\pi^2}{(1-s)k_*^2}\right\rceil, \qquad N_s = \left\lceil\frac{\pi^2}{s k_*^2}\right\rceil.
\end{align}
As pointed out in \cite{H:2020}, the method fits in the class of direct rational approximation techniques presented in Section \ref{sec:ratapprox}. In every quadrature node a parametric reaction-diffusion problem of the form \eqref{eq:reacdiff} must be approximated, which turns out to be the method's bottleneck. To alleviate the computational expenses, the authors of \cite{Bonito2020} propose to add an additional layer of approximation in the form of a RBM. Given a collection of snapshots $(t_j)_{j=0}^k$, the surrogate is defined by
\begin{align*}
	\mathbf{u}_{k+1}^{\text{Sinc}} := \frac{k_*\sin(\pi s)}{\pi}\sum_{j = -M_{s_{\max}}}^{N_{s_{\min}}} e^{(1-s)y_j} \mathbf{w}_{k+1}(e^{y_j}),
\end{align*}
where $0<s_{\min} \le s_{\max} < 1$ describes an interval for $s\in[s_{\min},s_{\max}]$ in which we wish to approximate $L^{-s}\mathbf{b}$ efficiently. Due to Theorems \ref{thm:krylov_interpol} and \ref{thm:equivalence}, we have
\begin{align*}
	\mathbf{u}_{k+1}^{\text{Sinc}} = \frac{k_*\sin(\pi s)}{\pi}\sum_{j = -M_{s_{\max}}}^{N_{s_{\min}}} e^{(1-s)y_j}r_j(L)\mathbf{b},
\end{align*}
where $r_j = p_j / q_{k}$, $p_j\in\mathcal{P}_k$, interpolates $f_j(z) := (e^{y_j} + z)^{-1}$ in the eigenvalues of $L_{k+1} = V^TLV$ and $q_k$ is defined by \eqref{eq:q} with $d_j = -t_j.$

The authors of \cite{Antil2019} pursue a similar approach. After algebraic manipulations of \eqref{eq:balakrishnan}, a Gauss-Laguerre quadrature is proposed to discretize the integral, which reads
\begin{align*}
	L^{-s}\mathbf{b} &= \frac{\sin(\pi s_-)}{\pi s_-}\int_0^\infty e^{-y}\mathbf{w}(e^{-\frac{y}{s_-}})\, dy + \frac{\sin(\pi s_+)}{\pi s_+}\int_0^\infty e^{-y}e^{\frac{y}{s_+}}\mathbf{w}(e^{\frac{y}{s_+}}) \, dy \\
	&\approx  \frac{\sin(\pi s_-)}{\pi s_-}\sum_{j=1}^{M_-} \tau_{j,-}\,\mathbf{w}(e^{-\frac{y_{j,-}}{s_-}}) + \frac{\sin(\pi s_+)}{\pi s_+} \sum_{j=1}^{M_+} \tau_{j,+}\, e^{\frac{y_{j,+}}{s_+}}\mathbf{w}(e^{\frac{y_{j,+}}{s_+}}).
\end{align*}
Here, $s_\pm := \frac{1}{2}\pm(s-\frac{1}{2})$ and $(\tau_{j,\pm}, y_{j,\pm})_{j=1}^{M_\pm}$ are the weights and nodes defining the quadrature rule, respectively. The choice
\begin{align*}
	M_+ = \left\lceil\frac{\pi^2}{4sk_*^2}\right\rceil \qquad M_- = \left\lceil\frac{\pi^2}{4(1-s)k_*^2}\right\rceil
\end{align*}
is suggested with parameter $k_*>0$ as in \eqref{eq:quadparam}. A RBM strategy is applied to each of the two sums to reduce the computational costs. Based on two different distributions of snapshots $(t_j^-)_{j = 0}^{k^-}$ and $(t_j^+)_{j = 0}^{k^+}$, $k^\pm\in\mathbb{N}$, together with their respective reduced basis approximations $\mathbf{w}_{k^-\hspace{-0.025cm}+1}^-$ and $\mathbf{w}_{k^+\hspace{-0.025cm}+1}^+$, the surrogate is defined by
\begin{align*}
	\mathbf{u}_{k+1}^{\text{Gauss}} := \frac{\sin(\pi s_-)}{\pi s_-}\sum_{j=1}^{M_-} \tau_{j,-}\,\mathbf{w}_{k^-\hspace{-0.025cm}+1}^-(e^{-\frac{y_{j,-}}{s_-}}) + \frac{\sin(\pi s_+)}{\pi s_+} \sum_{j=1}^{M_+} \tau_{j,+}\, e^{\frac{y_{j,+}}{s_+}}\mathbf{w}_{k^+\hspace{-0.025cm} +1}^+(e^{\frac{y_{j,+}}{s_+}}),
\end{align*}
where $k := k^- + k^+$ and $t_0^- = t_0^+$ is assumed for simplicity. Interpreting the RBM in the above procedure as a corresponding RKM, Theorems \ref{thm:krylov_interpol} and \ref{thm:equivalence} yield the representation
\begin{align*}
	\mathbf{u}_{k+1}^{\text{Gauss}} = \frac{\sin(\pi s_-)}{\pi s_-}\sum_{j=1}^{M_-} \tau_{j,-}\,r_j^-(L)\mathbf{b} + \frac{\sin(\pi s_+)}{\pi s_+} \sum_{j=1}^{M_+} \tau_{j,+}\, e^{\frac{y_{j,+}}{s_+}}r_j^+(L)\mathbf{b},
\end{align*}
where $r_j^\pm = p_j^\pm / q_{k^\pm}^\pm$, $p_j^\pm\in\mathcal{P}_k$, interpolates $f_j^\pm(z) := (e^{\pm \frac{y_{j,\pm}}{s_\pm}} + z)^{-1}$ in the eigenvalues of the corresponding projected operator, and $q_{k^\pm}^\pm$ is defined as in \eqref{eq:q} with $d_j^\pm = -t_j^\pm$, respectively.

We conclude that each of the two quadrature schemes listed above admits a representation as a matrix-vector product of the form $r(L)\mathbf{b}$, where $r$ is a rational function determined by the underlying RBM. Even though the RBM itself allows the interpretation as RKM, $\mathbf{u}_{k+1}^{\textsc{Sinc}}$ and $\mathbf{u}_{k+1}^{\textsc{Gauss}}$ cannot be extracted from a rational Krylov space via Rayleigh-Ritz extraction. The latter can be compensated by applying the RBM directly to the integrand of interest without discretization of the integral itself. To see this, let $\mathcal{V}_{k+1}(L,\mathbf{b})$ refer to an arbitrary reduced space with basis $V$ and $L_{k+1} = V^TLV$. We apply \eqref{eq:balakrishnan} to $L_{k+1}$ to deduce
\[
	L_{k+1}^{-s} = \frac{\sin(\pi s)}{\pi}\int_{0}^\infty t^{-s}(tI_{k+1} + L_{k+1})^{-1} \, dt,
\]
where $I_{k+1}\in\mathbb{R}^{(k+1)\times(k+1)}$ denotes the identity matrix. Hence,
\begin{equation}
	\label{eq:rkm_balakrishnan}
	VL_{k+1}^{-s}V^T\mathbf{b} = \frac{\sin(\pi s)}{\pi}\int_{0}^\infty t^{-s}V(tI_{k+1} + L_{k+1})^{-1}V^T\mathbf{b} \, dt = \frac{\sin(\pi s)}{\pi}\int_{0}^\infty t^{-s}\mathbf{w}_{k+1}(t) \, dt.
\end{equation}
Again, we make use of the transformation $y = \ln(t)$ and invoke \eqref{eq:generic_rbm} to conclude
\begin{align}
	\label{eq:bonito_continuous}
	\mathbf{u}_{k+1}^{\textsc{RB}} =  \frac{\sin(\pi s)}{\pi}\int_{-\infty}^\infty e^{(1-s)y}\mathbf{w}_{k+1}(e^y)\,dy,
\end{align}
if $f(z) = z^{-s}$ in \eqref{eq:generic_rbm}. Similarly, following the idea in \cite[Lemma 3.1]{Antil2019}, one verifies that for this particular choice of $f$
\begin{align}
	\label{eq:antil_continuous}
	\mathbf{u}_{k+1}^{\textsc{RB}} = \frac{\sin(\pi s_-)}{\pi s_-}\int_0^\infty e^{-y}\mathbf{w}_{k+1}(e^{-\frac{y}{s_-}})\, dy + \frac{\sin(\pi s_+)}{\pi s_+}\int_0^\infty e^{-y}e^{\frac{y}{s_+}}\mathbf{w}_{k+1}(e^{\frac{y}{s_+}}) \, dy,
\end{align}
which shows that the quadrature discretization can be omitted when using RBMs. Most notably, this allows us to spare the choice of the particular quadrature as well as the tuning of its associated parameters.

\begin{remark}
	The presented classification of RBMs in fractional diffusion problems is far from complete. E.g., in \cite{Antil2018}, the authors propose to apply a RBM to the extension framework \cite{Caffarelli2007}. The elliptic problem on the artificially extended domain is approximated in a way that makes it amenable to reduced basis technology. It is yet unclear whether this approach allows the interpretation as RKM and requires further investigation.
\end{remark}

It is evident that the performance of all algorithms hinges on a good selection of snapshots (or poles) which determine the underlying matrix $V$. Weak greedy algorithms are among the most popular strategies to provide a good choice for $(t_j)_{j=0}^k$, see, e.g., \cite{DeVore2013}. Provided a computationally efficient error estimator, their aim is to iteratively add those parameters which seemingly yield the largest discrepancy to the exact solution. The authors of \cite{Bonito2020} and \cite{Antil2019} advocate the implementation of such an algorithm combined with a residual-based error estimator to extract the snapshots from the desired parameter domain $\Xi\subset\mathbb{R}$. This approach comes with the benefit of nested spaces, i.e., $\mathcal{V}_k\subset\mathcal{V}_{k+1}$. A difficulty, however, is the fact that the efficient query of $s\mapsto \mathbf{u}_{k+1} \approx L^{-s}\mathbf b$, $\mathbf{u}_{k+1}\in\{\mathbf{u}_{k+1}^{\textsc{Sinc}}, \mathbf{u}_{k+1}^{\textsc{Gauss}}\}$, requires an $s$-independent selection of snapshots and is thus
either limited to proper subsets $s\in[s_{\min},s_{\max}]\subset(0,1)$, or necessitates $\Xi$ to be unbounded. The latter is difficult to tackle numerically. Motivated by our analysis provided in Section \ref{sec:analytical}, these inconveniences might be overcome if one omits the quadrature discretization as in \eqref{eq:rkm_balakrishnan} and chooses $\Xi = \Lambda = [\lambda_1,\lambda_n]$.

A number of algorithms for the adaptive choice of poles in rational Krylov
methods have been proposed as well
\cite{Druskin2010,Druskin2011,Guettel2013b}.
They generally rely on the spectral rational interpolant described in
Theorem~\ref{thm:krylov_interpol} and, unlike the greedy methods described
in the previous paragraph, do not require an error estimator in the
spatial domain, which typically makes their implementation more efficient.
To the best of our knowledge, the performance of these adaptive pole
selection rules for fractional diffusion problems has not been studied.

In contrast, the choice for $(t_j)_{j=0}^k$ proposed in \cite{DS:2019,DS:2020} is given in closed form independently of $\mathbf{b}$ and $s\in(0,1)$. It is based on the so-called Zolotar\"ev points and will be discussed in Section~\ref{sec:analytical} in more detail. Their computation only requires the knowledge of the extremal eigenvalues of $L$. The resulting spaces are not nested, that is, $\mathcal{V}_k\not\subset\mathcal{V}_{k+1}$. However, this drawback can be avoided by constructing the hierarchical sequence of sampling points proposed in \cite{Druskin2009b}, which asymptotically yields the same convergence rates as the ones obtained by Zolotar\"ev.

If the goal is to approximate $L^{-s}\mathbf{b}$ for one fixed value of $s\in(0,1)$, it might be more efficient to choose the low-dimensional space accordingly.
Several RKMs have been proposed which choose poles in dependence of the fractional order $s$; see, e.g., \cite{Aceto2019}.
Theorem \ref{thm:rkm_opt} makes it clear that the question of optimal poles directly relates to the best uniform rational approximation (BURA) of $f(z) = z^{-s}$ in the spectral interval, which has been comprehensively studied throughout the last years in the framework of fractional diffusion \cite{Harizanov2018,Harizanov2018b,Harizanov2020,H:2020}. Until recently, numerical instabilities while computing the BURA were a major obstacle in the availability of optimal poles.
A remedy for this problem was recently proposed in the form of a novel
algorithm for the fast and robust computation of BURAs using only standard
double-precision arithmetic \cite{H:2021}.

\subsection{A rational Krylov method using best-approximation poles}
\label{sec:bura-rkm}

The quasi-optimality result Theorem~\ref{thm:rkm_opt} suggests the use of the
poles of the best uniform rational approximation to $f(z) = z^{-s}$ in the
spectral interval $\Lambda$ as the poles of the rational Krylov method.
Let $r^*_k$ be the rational function of degree at most $k$ which minimizes the
maximum error,
\[
\norm{z^{-s} - r^*_k(z)}_{L_\infty(\Lambda)} = \min_{p,q \in \mathcal P_{k}} \norm{z^{-s} - \frac {p(z)}{q(z)}}_{L_\infty(\Lambda)},
\]
and $(d_j)_{j=0}^k$ its poles. It is a classical result that $r^*_k$ exists and
is unique (see, e.g., \cite{Achieser1992}). We can obtain results on its
approximation quality from the work of Stahl~\cite{Stahl2003}, who has
shown that the best rational approximation $\tilde r^*_k$ to $z^s$ in $[0,1]$
satisfies the error estimate
\[
\norm{z^s - \tilde r^*_k(z)}_{L_\infty[0,1]} \le C_s \exp(-2\pi \sqrt{k s})
\]
with a constant $C_s>0$ which depends on $s$.
Let $\lambda_1>0$ and $\lambda_n$ be the smallest and largest eigenvalues of
$L$, respectively, and define, as in \cite{Harizanov2020},
$r_k(z) := \lambda_1^{-s} \tilde r^*_k(\lambda_1 z^{-1})$.
Then
\[
\norm{z^{-s} - r_k(z)}_{L_\infty(\lambda_1,\lambda_n)}
= \lambda_1^{-s} \left\| \left(\frac {\lambda_1}{z} \right)^{s}
- \tilde r^*_k\left(\frac {\lambda_1}{z} \right) \right\|_{L_\infty(\lambda_1,\lambda_n)}
\le C_s \lambda_1^{-s} \exp(-2\pi \sqrt{k s}).
\]
One easily sees that $r_k$ satisfies the requisite equioscillation conditions
and thus is the best rational approximation to $z^{-s}$ in $[\lambda_1,
\infty)$.  By definition, it follows that
\[
\norm{z^{-s} - r^*_k(z)}_{L_\infty(\Lambda)}
\le \norm{z^{-s} - r_k(z)}_{L_\infty(\Lambda)}
\le C_s \lambda_1^{-s} \exp(-2\pi \sqrt{k s}).
\]
Thus, for the rational Krylov method which approximates
$L^{-s} \mathbf b$ using the poles $(d_j)_{j=0}^k$ of the best rational
approximation, Theorem~\ref{thm:rkm_opt} yields the error estimate
\begin{equation}
\label{eq:bura_error}
    \norm{L^{-s} \mathbf b - \mathbf u_{k+1}} \le
    2 C C_s \lambda_1^{-s} \exp(-2\pi \sqrt{k s}) \norm{\mathbf b}.
\end{equation}
Typically, the smallest eigenvalue (which is closely related to the Poincar\'e
constant of $\Omega$) satisfies $\lambda_1 \ge 1$ but is uniformly bounded with
respect to the discretization parameters, and we can thus ignore the dependence
on $\lambda_1$.

The above estimate makes use only of information on $\lambda_1$. If $\lambda_n$
(or a good bound for it) is known as well, we can directly use the best
rational approximation of $z^{-s}$ on $\Lambda=[\lambda_1,\lambda_n]$ whose
error is smaller than that of the best approximation on $[\lambda_1,\infty)$
and base a rational Krylov method on its poles.
To the best of our knowledge, the analytic behavior of the error of the
best rational approximation to $z^{-s}$ on a finite interval is not known.
For the special case $z \mapsto z^{-1/2}$, the rational function which
minimizes the \emph{relative} maximum error in a finite interval is
explicitly known in terms of elliptic functions \cite{Chiu2002}.
It does not seem that this construction generalizes to different
exponents, however.
Error estimates for certain $(k-1,k)$-Pad\'e approximations to $z^{-s}$ are
given in \cite{Aceto2019b}; very roughly speaking, the authors give estimates
of the order $\sim(k/s)^{-4s}$ in the case of unbounded spectrum and
$\sim\exp(-4k / \sqrt[4]{\kappa})$ with the condition number
$\kappa = \lambda_n /  \lambda_1$ for bounded spectrum.
The latter bound becomes poor as $\kappa\to\infty$ and does not have
the root-exponential bound by Stahl cited above as its limiting case.

Since best rational approximations are usually not known explicitly, they have to
be approximated numerically. The most commonly used algorithm for this task is
the rational Remez algorithm, which is based on the equioscillation property of
the best-approximation error, but is highly numerically unstable in its classical
formulation. To mitigate this problem, extended arithmetic precision has often
been employed (see, e.g., \cite{Varga1992}), which however has the drawback of
high computational effort due to the lack of hardware support for extended
precision arithmetic. Alternate approaches were recently proposed in
\cite{Ionita:PhD,Filip2018}, where new formulations of the Remez algorithm
based on the so-called barycentric rational formula were given, significantly
improving the numerical stability. In particular, the \texttt{minimax} routine
in the latest version of the \texttt{Chebfun} software package \cite{Driscoll2014}
is based on \cite{Filip2018}. Unfortunately, this routine still does not work
well for functions of the type we are interested here. For this purpose, the
second author has recently proposed a novel algorithm for best rational approximation
based on barycentric rational interpolation called BRASIL \cite{H:2021}
which can compute the needed best rational approximations rapidly, to very high
degrees, and using only standard double-precision arithmetic.

\section{Analytical results}
\label{sec:analytical}

Each of the algorithms presented in the previous sections is directly related to rational Krylov or, in the broader sense, rational approximation methods. This changed point of view allows us to use standard techniques from these fields to either provide novel convergence results or illuminate available proofs from a different perspective. We start with the following lemma which is instrumental in the analysis of one of the aforementioned reduced basis schemes.
\begin{lemma}
    \label{lem:rbm_error}
    Let $t\in\mathbb{R}_0^+$, $\mathbf{w}_{k+1}(t)$ the reduced basis approximation of $\mathbf{w}(t)$ with snapshots $(t_j)_{j=0}^k\subset \overline{\mathbb{R}}_0^+$, and $C$ as in Theorem \ref{thm:rkm_opt}. Then there holds for every set $\Sigma := [\lambda_l,\lambda_u]\supset\Lambda$, $\lambda_l>0$,
	\begin{align*}
		\norm{\mathbf{w}_{k+1}(t) - \mathbf{w}(t)} \leq \frac{2C}{t+\lambda_l}\norm{\Theta}_{L_\infty(\Sigma)}\norm{\mathbf b},\qquad \Theta(z) := \prod_{\substack{j=0 \\ t_j\neq \infty}}^k\frac{z-t_j}{z+t_j}.
	\end{align*}
\end{lemma}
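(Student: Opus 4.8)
The plan is to recognize $\mathbf{w}_{k+1}(t)$ as the rational Krylov (equivalently, reduced basis) approximation to $f(L)\mathbf{b}$ for the specific function $f(z) = (t+z)^{-1}$, and then apply the quasi-optimality estimate of Theorem~\ref{thm:rkm_opt} together with an explicit choice of the competitor rational function. First I would invoke Lemma~\ref{lemma:equal_searchspace} to identify the reduced space $\mathcal{V}_{k+1}(L,\mathbf{b})$ with snapshots $(t_j)_{j=0}^k$ with the rational Krylov space $\mathcal{Q}_{k+1}(L,\mathbf{b})$ with poles $(d_j)_{j=0}^k = (-t_j)_{j=0}^k$; the Galerkin projection defining $\mathbf{w}_{k+1}(t)$ in \eqref{eq:rbm} is then precisely the Rayleigh--Ritz extraction \eqref{eq:rkm} for $f(z) = (t+z)^{-1}$, since $f(L_{k+1}) = (tI_{k+1}+L_{k+1})^{-1}$. (Here I use that $t\in\mathbb{R}_0^+$ and $\lambda_l>0$ guarantee $f$ is analytic in a neighborhood of $\Sigma\supseteq\mathbb{W}(L)$, as $L$ is self-adjoint with spectrum in $\Lambda$.) Theorem~\ref{thm:rkm_opt}, applied with the self-adjoint constant absorbed — note the lemma keeps $C$ general, so I just quote the bound with $C$ — then gives
\[
    \norm{\mathbf{w}_{k+1}(t) - \mathbf{w}(t)} \leq 2C\,\norm{\mathbf{b}}\,\min_{p\in\mathcal{P}_k}\norm{f - p/q_k}_{L_\infty(\Sigma)},
\]
where $q_k(z) = \prod_{j:\,t_j\neq\infty}(z+t_j)$ by \eqref{eq:q} with $d_j=-t_j$.

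The second step is to bound the min over $p$ by exhibiting one good choice. Take $p$ to be the polynomial interpolating $z\mapsto q_k(z)/(t+z)$ at the points $(-t_j)$ — but more directly, I would simply choose $p \in \mathcal{P}_k$ so that
\[
    \frac{1}{t+z} - \frac{p(z)}{q_k(z)} = \frac{1}{t+z}\cdot\prod_{\substack{j=0\\ t_j\neq\infty}}^k \frac{z-t_j}{z+t_j},
\]
i.e.\ I want $q_k(z) - (t+z)p(z) = \prod_{j:\,t_j\neq\infty}(z-t_j)$. Since the right-hand side and $q_k$ both lie in $\mathcal{P}_{k+1}$ (or $\mathcal{P}_k$ if some $t_j=\infty$) and agree at $z=-t$ up to a factor — wait, one checks that $q_k(-t) - 0 = \prod(-t-t_j)$ while the target is $\prod(-t-t_j)$, so the difference $q_k(z) - \prod_{j:\,t_j\neq\infty}(z-t_j)$ vanishes at $z=-t$ and hence is divisible by $(t+z)$, defining $p\in\mathcal{P}_k$ uniquely. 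With this $p$ the error equals $\frac{1}{t+z}\Theta(z)$ on $\Sigma$, and since $z\in\Sigma$ implies $z\geq\lambda_l$, we get the pointwise bound $|f(z)-p(z)/q_k(z)| \leq \frac{1}{t+\lambda_l}\,|\Theta(z)| \leq \frac{1}{t+\lambda_l}\norm{\Theta}_{L_\infty(\Sigma)}$. Combining with the quasi-optimality estimate yields exactly the claimed inequality.

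I would also separately dispatch the degenerate cases: if $t=\infty$ then $\mathbf{w}(\infty)=\mathbf{b}$ by convention and $\mathbf{b}\in\mathcal{V}_{k+1}$ would need one $t_j=\infty$, but the lemma restricts $t\in\mathbb{R}_0^+$ so this does not arise; if some snapshot $t_j=\infty$, the corresponding factor is simply omitted from both $q_k$ and $\Theta$, consistently with \eqref{eq:q}, and the argument is unchanged. The main obstacle — really the only nontrivial point — is verifying that the partial-fraction/divisibility construction of $p$ in the second step is valid and that the resulting error genuinely factors as $\Theta(z)/(t+z)$; this is a short polynomial-division argument, but it is where one must be careful about the $t_j=\infty$ bookkeeping and about the degree count ($p\in\mathcal{P}_k$ and not $\mathcal{P}_{k+1}$). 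Everything else is a direct citation of Theorem~\ref{thm:rkm_opt} via the identification in Lemma~\ref{lemma:equal_searchspace}.
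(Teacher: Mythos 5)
Your overall strategy is the same as the paper's: identify $\mathbf{w}_{k+1}(t)$ with the Rayleigh--Ritz extraction of $f(z)=(t+z)^{-1}$ from the rational Krylov space via Lemma~\ref{lemma:equal_searchspace}, apply the quasi-optimality estimate of Theorem~\ref{thm:rkm_opt}, and bound the resulting minimum by exhibiting an explicit competitor $p/q_k$. The first part is correct. The gap is in the step you yourself single out as the only nontrivial one: the polynomial $p$ you try to construct does not exist. You ask for $q_k(z)-(t+z)p(z)=\prod_{j:\,t_j\neq\infty}(z-t_j)$, which requires $(z+t)$ to divide $q_k(z)-\prod_{j}(z-t_j)$, i.e.\ $q_k(-t)=\prod_{j}(-t-t_j)$. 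But $q_k(z)=\prod_{j}(z+t_j)$, so $q_k(-t)=\prod_{j}(t_j-t)$, whereas $\prod_{j}(-t-t_j)=(-1)^m\prod_{j}(t+t_j)$ with $m$ the number of finite snapshots; your verification ``$q_k(-t)=\prod(-t-t_j)$'' is an algebra slip. Concretely, with a single finite snapshot $t_1=1$ and $t=2$ one has $q_k(z)=z+1$ and $q_k(z)-(z-1)=2$, which is not divisible by $z+2$, so no polynomial realizes your identity.

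The repair is exactly the paper's construction: define $\bar p$ by the interpolation conditions $\bar p(t_i)=(t+t_i)^{-1}q_k(t_i)$ at the (positive) snapshots $t_i$, so that $\bar p/q_k$ interpolates $(t+z)^{-1}$ there. The numerator of $(t+z)^{-1}-\bar p(z)/q_k(z)$ then vanishes at all $t_i$ and has the right degree, hence the residual equals $c(t)\prod_{j}(z-t_j)\big/\bigl((t+z)\prod_{j}(z+t_j)\bigr)$, and evaluating at $z=-t$ gives $c(t)=\prod_{j}\tfrac{t-t_j}{t+t_j}$. The residual is therefore $c(t)\,\Theta(z)/(t+z)$, not $\Theta(z)/(t+z)$; the claimed bound survives only because $\abs{c(t)}\le 1$ for $t,t_j\ge 0$. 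In the example above the residual is $\tfrac13\,(z-1)\big/\bigl((z+2)(z+1)\bigr)$. So the factor $c(t)$ is not optional bookkeeping: it is precisely what makes an admissible polynomial competitor exist, and the observation that its modulus is at most one is the substantive point your argument is missing.
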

\begin{proof}
	The proof follows the outline of \cite[Lemma 5.12]{DS:2019}. Due to Lemma \ref{lemma:equal_searchspace} and Theorem \ref{thm:rkm_opt} we have
	\begin{align*}
		\norm{\mathbf{w}_{k+1}(t) - \mathbf{w}(t)} \leq 2C\norm{\mathbf b}\min_{p\in\mathcal{P}_k}\norm{(t+z)^{-1} - p(z) / q_k(z)}_{L_\infty(\Sigma)},
	\end{align*}
	where $q_k$ is defined as in \eqref{eq:q} with $d_j = -t_j$. Assume for now $t_j = \infty$ for  some $j\in\{0,\dots,k\}$; without loss of generality, we choose $j = 0$. The right-hand side can be bounded by
	\begin{align*}
		\min_{p\in\mathcal{P}_k}\norm{(t+z)^{-1} - p(z) / q_k(z)}_{L_\infty(\Sigma)} \leq \norm{(t+z)^{-1} - \bar{p}(z) / q_k(z)}_{L_\infty(\Sigma)},
	\end{align*}
	where $\bar{p}\in\mathcal{P}_{k-1}$ is uniquely defined by
	\begin{align}
		\label{eq:intpoly}
		\bar{p}(t_i) = (t+t_i)^{-1}q_k(t_i), \rlap{\qquad $i = 1,\dots,k.$}
	\end{align}
	Thanks to this interpolation property, we have that $\bar{p} / q_k$ interpolates $(t+z)^{-1}$ in $t_j$, $j= 1,\dots,k$. Moreover, the difference of both functions is a rational function of degree $(k,k+1)$, such that
	\begin{align*}
		(t+z)^{-1} - \frac{\bar{p}(z)}{q_k(z)} = \frac{c(t)\prod_{j=1}^k(z-t_j)}{(t+z)\prod_{j=1}^k(z+t_j)}
	\end{align*}
	for some $t$-dependent constant $c(t)\in\mathbb{R}$. Multiplying both sides with $(t+z)$ and setting $z=-t$ reveals
	\begin{align*}
		c(t) = \prod_{j=1}^k\frac{t-t_j}{t+t_j}
	\end{align*}
	with absolute value smaller than $1$, which is why the claim holds if $t_0 = \infty$. Otherwise, we can choose $\bar{p}\in\mathcal{P}_k$ according to
	\begin{align*}
		\bar{p}(t_i) = (t+t_i)^{-1}q_k(t_i), \rlap{\qquad $i = 0,\dots,k.$}
	\end{align*}
	Similarly to before, one confirms
	\begin{align*}
		|(z+t)^{-1} - \bar{p}(z) / q_k(z)| \leq \left|\frac{\prod_{j=0}^k(z-t_j)}{(t+z)\prod_{j=0}^k(z+t_j)}\right|,
	\end{align*}
	which proves the claim.
\end{proof}
Instead of applying Theorem \ref{thm:rkm_opt} directly to $f(z) = z^{-s}$, the authors of \cite{DS:2019,DS:2020} aim for a selection of poles according to a (uniform in the parameter $t$) rational approximation of the resolvent function $f(z) = (t+z)^{-1}$. They propose to choose $t_0 = \infty$ and
\begin{align}
	\label{eq:zolopoints}
	t_j = \lambda_n\operatorname{dn}\left(\frac{2(k-j)+1}{2k}K(\delta'),\delta'\right), \qquad \delta' = \sqrt{1-\delta^2}, \qquad \delta = \frac{\lambda_1}{\lambda_n},
\end{align}
for $j= 1,\dots,k$, where $\operatorname{dn}$ denotes the Jacobi elliptic function and $K$ the elliptic integral of first kind; see \cite[Section 16 \& 17]{Abramowitz1964}. These snapshots are a scaled version of the so-called Zolotar\"ev points \cite{Zolotarev1877,Gonchar1969,Oseledets2007}, which are known to minimize the maximal deviation of $\Theta(z)$ in Lemma \ref{lem:rbm_error} over the spectral interval of $L$. As a direct consequence, we obtain exponential convergence for the reduced basis approximation \eqref{eq:generic_rbm} when using \eqref{eq:zolopoints} in the case $f(z) = z^{-s}$, where no analytical result has been available yet.
\begin{theorem}
	\label{thm:convergence_int}
	Let $t_0 = \infty$, $(t_j)_{j=1}^k$ the scaled Zolotar\"ev points from \eqref{eq:zolopoints}, $C$ as in Theorem \ref{thm:rkm_opt}, and $f(z) = z^{-s}$ in \eqref{eq:generic_rbm}. Then there holds for all $s\in(0,1)$
	\begin{align*}
		\norm{\mathbf{u} - \mathbf{u}_{k+1}^\textsc{RB}} \le 4C\lambda_1^{-s}e^{-C^*k}\norm{\mathbf{b}},
	\end{align*}
	where
	\begin{align*}
		C^* = \frac{\pi K(\mu_1)}{4K(\mu)},\qquad \mu = \left(\frac{1-\sqrt{\delta}}{1+\sqrt{\delta}}\right)^2, \qquad \mu_1 = \sqrt{1-\mu^2}, \qquad \delta = \frac{\lambda_1}{\lambda_n}.
	\end{align*}
\end{theorem}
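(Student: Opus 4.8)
The plan is to reduce everything to the uniform-in-$t$ reduced basis estimate of Lemma~\ref{lem:rbm_error} and the classical extremal property of the scaled Zolotar\"ev points \eqref{eq:zolopoints}. First I would pass to the continuous integral representation of the surrogate. Since $L_{k+1} = V^T L V$ is symmetric positive definite, the Dunford--Taylor formula \eqref{eq:balakrishnan} applies to it, which is exactly what produces \eqref{eq:bonito_continuous}; hence for $f(z) = z^{-s}$,
\[
    \mathbf{u}_{k+1}^{\textsc{RB}} = \frac{\sin(\pi s)}{\pi}\int_{-\infty}^{\infty} e^{(1-s)y}\,\mathbf{w}_{k+1}(e^y)\,dy,
\]
while $\mathbf{u} = L^{-s}\mathbf{b}$ has the analogous representation \eqref{eq:sincappr} with $\mathbf{w}$ in place of $\mathbf{w}_{k+1}$. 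Subtracting and using the triangle inequality inside the integral gives
\[
    \norm{\mathbf{u} - \mathbf{u}_{k+1}^{\textsc{RB}}} \le \frac{\sin(\pi s)}{\pi}\int_{-\infty}^{\infty} e^{(1-s)y}\,\norm{\mathbf{w}(e^y) - \mathbf{w}_{k+1}(e^y)}\,dy.
\]

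Next I would insert the pointwise bound from Lemma~\ref{lem:rbm_error}, applied with the choice $\Sigma = \Lambda$ (i.e.\ $\lambda_l = \lambda_1$, $\lambda_u = \lambda_n$), which gives $\norm{\mathbf{w}(e^y) - \mathbf{w}_{k+1}(e^y)} \le \frac{2C}{e^y + \lambda_1}\,\norm{\Theta}_{L_\infty(\Lambda)}\,\norm{\mathbf{b}}$ with $\Theta(z) = \prod_{j=1}^{k}\frac{z - t_j}{z + t_j}$ (the snapshot $t_0 = \infty$ contributing no factor). The quantity $\norm{\Theta}_{L_\infty(\Lambda)}$ is independent of $y$ and pulls out of the integral, leaving the scalar integral $\int_{-\infty}^{\infty} \frac{e^{(1-s)y}}{e^y + \lambda_1}\,dy$, which under the substitution $t = e^y$ becomes $\int_{0}^{\infty} \frac{t^{-s}}{t + \lambda_1}\,dt = \lambda_1^{-s}\,\frac{\pi}{\sin(\pi s)}$ by Euler's reflection formula $\Gamma(s)\Gamma(1-s) = \pi/\sin(\pi s)$. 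The prefactor $\sin(\pi s)/\pi$ cancels against this, and one obtains the clean intermediate bound $\norm{\mathbf{u} - \mathbf{u}_{k+1}^{\textsc{RB}}} \le 2C\,\lambda_1^{-s}\,\norm{\Theta}_{L_\infty(\Lambda)}\,\norm{\mathbf{b}}$.

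It remains to bound $\norm{\Theta}_{L_\infty(\Lambda)}$, and this is the only genuinely technical step, i.e.\ the main obstacle. The nodes \eqref{eq:zolopoints} are precisely the minimizers of $\max_{z \in \Lambda}|\Theta(z)|$ over all admissible node sets, and the minimal value is expressible through Jacobi elliptic functions. I would quote the classical estimate used in \cite{DS:2019} (going back to \cite{Zolotarev1877,Gonchar1969,Oseledets2007}),
\[
    \norm{\Theta}_{L_\infty(\Lambda)} \le 2\,e^{-C^* k},\qquad C^* = \frac{\pi K(\mu_1)}{4 K(\mu)},
\]
where the factor $1/4$ and the passage from the modulus $\delta'$ appearing in \eqref{eq:zolopoints} to the Landen-transformed modulus $\mu$ arise from the (double) Landen transformation that reduces the two-interval Zolotar\"ev problem on $[-\lambda_n,-\lambda_1]\cup[\lambda_1,\lambda_n]$ to a single-interval one. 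Substituting this into the intermediate bound yields $\norm{\mathbf{u} - \mathbf{u}_{k+1}^{\textsc{RB}}} \le 4C\,\lambda_1^{-s}\,e^{-C^* k}\,\norm{\mathbf{b}}$, as claimed.

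The integral representation and the evaluation of the scalar integral are routine; the sharp Zolotar\"ev bound with the exact constant $C^*$ is the crux, but it is a statement purely about elliptic functions, independent of the fractional setting, and is already established in \cite{DS:2019}, so I would cite rather than reprove it. One should only note in passing that $t_0 = \infty$ is what guarantees $\mathbf{b} \in \mathcal{V}_{k+1}$, which is implicitly needed for the representation \eqref{eq:bonito_continuous}, and that Lemma~\ref{lem:rbm_error} covers exactly the range $t = e^y \in (0,\infty)$ swept by the integral.
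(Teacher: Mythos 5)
Your proposal is correct and follows essentially the same route as the paper's proof: the Dunford--Taylor representation of both $\mathbf{u}$ and $\mathbf{u}_{k+1}^{\textsc{RB}}$, the triangle inequality under the integral, Lemma~\ref{lem:rbm_error}, evaluation of the scalar integral $\int_0^\infty t^{-s}(t+\lambda_1)^{-1}\,dt = \lambda_1^{-s}\pi/\sin(\pi s)$, and the cited Zolotar\"ev bound $\norm{\Theta}_{L_\infty(\Lambda)}\le 2e^{-C^*k}$. The only cosmetic difference is that you work in the logarithmic variable $y=\ln t$ whereas the paper integrates directly in $t$.
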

\begin{proof}
	Due to \eqref{eq:balakrishnan} and \eqref{eq:rkm_balakrishnan}, we have that
	\begin{align*}
		\mathbf{u} = \frac{\sin(\pi s)}{\pi}\int_0^\infty t^{-s}(tI+L)^{-1}\mathbf{b}\,dt, \qquad \mathbf{u}_{k+1}^\textsc{RB} = \frac{\sin(\pi s)}{\pi}\int_0^\infty t^{-s} V(tI_{k+1} + L_{k+1})^{-1}V^T\mathbf{b}\, dt.
	\end{align*}
	Lemma \ref{lem:rbm_error} yields
	\begin{align*}
		\norm{\mathbf{u} - \mathbf{u}_{k+1}^\textsc{RB}} &\le \frac{\sin(\pi s)}{\pi}\int_0^\infty t^{-s}\norm{(tI+L)^{-1}\mathbf{b} - V(tI_{k+1} + L_{k+1})^{-1}V^T\mathbf{b}}\,dt \\
		&= \frac{\sin(\pi s)}{\pi}\int_0^\infty t^{-s}\norm{\mathbf{w}(t) - \mathbf{w}_{k+1}(t)}\,dt \\
		&\le 2C\norm{\Theta}_{L_\infty(\Lambda)}\norm{\mathbf b}\,\frac{\sin(\pi s)}{\pi}\int_0^\infty t^{-s}(t+\lambda_1)^{-1}\, dt = 2C\lambda_1^{-s}\norm{\Theta}_{L_\infty(\Lambda)}\norm{\mathbf b},
	\end{align*}
	where the last equality follows from the scalar version of \eqref{eq:balakrishnan}.	With $(t_j)_{j=1}^{k}$ as in \eqref{eq:zolopoints}, we make use of the bound
	\begin{align*}
		\norm{\Theta}_{L_\infty(\Lambda)} \leq 2e^{-C^*k},
	\end{align*}
	a well-known property of the Zolotar\"ev points
    \cite{Zolotarev1877,Gonchar1969,Oseledets2007},
    to complete the proof.
\end{proof}

\section{Numerical Results}
\label{sec:numerics}

This section is devoted to a numerical comparison of the algorithms discussed above, incorporating efficiency, similarities, and performance with respect to several values of the parameter $s$. All methods are implemented in the open source finite element library Netgen/NGSolve\footnote{\url{https://ngsolve.org/}} \cite{Netgen,NGSolve}. We consider the fractional diffusion model problem
\begin{align}
	\label{eq:fraclaplace}
	(-\Delta)^s u = 1\;\;\text{on}\; \Omega, \qquad u = 0\;\;\text{on}\; \partial\Omega,
\end{align}
on the unit square $\Omega := (0,1)^2$ for $s\in\{0.2, 0.5, 0.8\}$. To discretize \eqref{eq:fraclaplace}, we use a finite element space $V_h\subset H_0^1(\Omega)$ constructed over a quasi-uniform triangulation of maximal mesh size $h = 0.008$ and polynomial order $p = 1$. The resulting extremal eigenvalues of $L$ satisfy $\lambda_1\approx 19.74$ and $\lambda_n\approx 560718.48$. For the sake of presentation, we consider only one algorithm from the class of quadrature-based RBMs, namely $\mathbf{u}_{k+1}^{\textsc{Sinc}}$, and omit the dual reduced basis approximation. For a detailed investigation of $\mathbf{u}_{k+1}^{\textsc{Gauss}}$ and $\mathbf{u}_{k+1}^{\textsc{Dual}}$ we refer to \cite{Antil2019} and \cite{DS:2020}, respectively. In favour of comparability, we choose $t_0 = -d_0 = \infty$ in all methods under consideration. The remaining parameters are specified as follows.
\begin{itemize}
	\item For the quadrature-based RBM $\mathbf{u}_{k+1}^{\textsc{Sinc}}$, we set $s_{\min} := 0.2$, $s_{\max} := 0.8$, $k_* := 0.15$, $M_{s_{\max}}$ and $N_{s_{\min}}$ according to \eqref{eq:quadparam}, and $(t_j)_{j = 1}^k\subset[e^{-M_{s_{\max}}k_*}, e^{N_{s_{\min}}k_*}]$ according to the residual-based weak greedy algorithm proposed in \cite{Bonito2020}.
	\item For the rational Krylov approximation, we choose $f(z) = z^{-s}$ and investigate, in view of Theorem \ref{thm:equivalence}, four different configurations of poles or rather snapshots.
		\begin{itemize}
			\item For $\mathbf{u}_{k+1}^{\textsc{Zolo}}$, we choose the snapshots $(t_j)_{j=1}^k$ as scaled Zolotar\"ev points \eqref{eq:zolopoints}. This configuration corresponds to one of the interpolation-based RBM proposed in \cite{DS:2020}. The evaluations of the Jacobi elliptic function and the elliptic integral is performed by means of the special function library provided by \texttt{Scipy}\footnote{\url{https://docs.scipy.org/doc/scipy/reference/special.html}}.

			\item For $\mathbf{u}_{k+1}^{\textsc{Greedy}}$, we choose the same snapshots as for $\mathbf{u}_{k+1}^\textsc{Sinc}$.

            \item For $\mathbf{u}_{k+1}^{\textsc{Jac}}$, we choose the poles $(d_j)_{j=1}^k$ according to the distribution proposed by \cite{Aceto2019}, which is based on a Gauss-Jacobi quadrature approximation for $z^{-s}$.

            \item For $\mathbf{u}_{k+1}^{\textsc{Bura}}$, we choose $(d_j)_{j=1}^k$ according to the BURA poles of $f(z) = z^{-s}$ in $[\lambda_1,\lambda_n]$ obtained by the BRASIL algorithm \cite{H:2021}, which is contained in the \texttt{baryrat}\footnote{\url{https://github.com/c-f-h/baryrat}} open-source Python package developed by the second author.
		\end{itemize}
	\item For the direct rational approximation method $\mathbf{u}_{r} =: \mathbf{u}_{k+1}^{\textsc{Direct}}$, presented in Section \ref{sec:ratapprox}, we choose $r\in\mathcal{R}_{k,k}$ as the best uniform rational approximation of $f(z) = z^{-s}$ on $[\lambda_1,\lambda_n]$ obtained by the BRASIL algorithm \cite{H:2021}.
\end{itemize}

The $L_2$-errors between the expensive discrete solution $\mathbf{u}$ in the sense of \eqref{eq:dem} to \eqref{eq:fraclaplace} and its low-dimensional surrogates obtained by the six methods listed above are reported in Figures \ref{fig:convergence02}, \ref{fig:convergence05}, and \ref{fig:convergence08} for the values of $s = 0.2, 0.5, 0.8$, respectively.

\begin{itemize}
    \item In all cases, exponential convergence can be observed. For the RKM with BURA poles, the rate of convergence is significantly better than predicted by \eqref{eq:bura_error}. One reason for this is the fact that the error bound does not exploit the knowledge of the largest eigenvalue of $L$, but is based on a selection of poles on the unbounded interval $[\lambda_1,\infty)$, as discussed in Section~\ref{sec:bura-rkm}.

	\item For $s = 0.2$, $\mathbf{u}_{k+1}^{\textsc{Sinc}}$, $\mathbf{u}_{k+1}^{\textsc{Zolo}}$, $\mathbf{u}_{k+1}^{\textsc{Greedy}}$, and $\mathbf{u}_{k+1}^{\textsc{Jac}}$ satisfy exponential convergence of order $\mathcal{O}(e^{-C^*k})$, with $C^*$ as in Theorem \ref{thm:convergence_int}. In the case of $\mathbf{u}_{k+1}^{\textsc{Zolo}}$ and $\mathbf{u}_{k+1}^{\textsc{Sinc}}$, this is in accordance with Theorem \ref{thm:convergence_int} and \cite[Lemma 3.3]{Bonito2020}, respectively. For $s\in\{0.5, 0.8\}$, better convergence rates can be observed. A possible explanation for this is the particular choice of $\mathbf{b}$. In \cite{DS:2020} it has already been observed experimentally that for some configurations of the right-hand side, $\mathbf{u}_{k+1}^{\textsc{Zolo}}$ converges with the predicted convergence rate irrespectively of the fractional order.

	\item Those two methods which rely on the BURA, that is, $\mathbf{u}_{k+1}^{\textsc{Bura}}$ and $\mathbf{u}_{k+1}^{\textsc{Direct}}$, provide the best approximation among all tested methods irrespectively of the fractional order. The observed rate of convergence is between $\mathcal{O}(e^{-3.9C^*k})$ and  $\mathcal{O}(e^{-3.6C^*k})$. In view of Theorem \ref{thm:rational_error} and \ref{thm:rkm_opt}, it is not surprising that these methods perform qualitatively similar. What stands out, however, is the observation that the quasi-optimal extraction of $\mathbf{u}_{k+1}^{\textsc{Bura}}$ from $\mathcal{Q}_{k+1}(L,\mathbf{b})$ yields slightly better results than $\mathbf{u}_{k+1}^{\textsc{Direct}}$, which is based on the true BURA of $z^{-s}$. This is due to the fact that the former incorporates information about the right-hand side, which allows the RKM to bias the surrogate towards the particular choice of $\mathbf{b}$. The discrepancy between $\mathbf{u}_{k+1}^{\textsc{Bura}}$ and $\mathbf{u}_{k+1}^{\textsc{Direct}}$ becomes more significant if we choose $b$ in \eqref{eq:fracdiff} sufficiently smooth with homogeneous boundary conditions. In this case, the excitations $\mathbf{b}_j := (\mathbf{b},\mathbf{u}_j)$ of $\mathbf{b}$, with $\mathbf{u}_j$ as in \eqref{eq:discr_eig}, decay quickly, such that $\mathbf{u}_{k+1}^{\textsc{Direct}}$, which assumes a uniform distribution of excitations, requires substantially more linear solves to reach a prescribed accuracy compared to its rational Krylov competitor.

	\item The approximations $\mathbf{u}_{k+1}^{\textsc{Sinc}}$ and $\mathbf{u}_{k+1}^{\textsc{Greedy}}$ coincide for all values of $k$ and $s$. The additional quadrature discretization appears to have no impact on the quality of $\mathbf{u}_{k+1}^{\textsc{Sinc}}$ at all. A possible reason for this might be the fact that the sinc quadrature in \eqref{eq:sincappr} is (close to) exact if we replace $\mathbf{w}$ by $\mathbf{w}_{k+1}$. Indeed, we observe numerically that for any $\lambda\in[\lambda_1,\lambda_n]$
	\begin{align*}
		\int_{-\infty}^\infty e^{(1-s)y}r_y(\lambda) \, dy \approx k_*\sum_{j = -M_{s_{\max}}}^{N_{s_{\min}}}e^{(1-s)y_j}r_{y_j}(\lambda)
	\end{align*}
    up to machine precision, where $r_y$ is the rational function from Theorem \ref{thm:krylov_interpol} with poles in the negative snapshots that interpolates the resolvent function $f_y(z) := (e^y+z)^{-1}$ in the rational Ritz values of the underlying RKM. This exactness property of the quadrature can also be observed for $\mathbf{u}_{k+1}^{\textsc{Gauss}}$. That is, if we greedily sample the snapshots $(t_j)_{j=1}^k$ from a sufficiently large interval such that all three approximations $\mathbf{u}_{k+1}^{\textsc{Greedy}}$, $\mathbf{u}_{k+1}^{\textsc{Sinc}}$, and $\mathbf{u}_{k+1}^{\textsc{Gauss}}$ are built upon that same search space, we observe numerically for, e.g., $s = 0.5$, that $\mathbf{u}_{k+1}^{\textsc{Greedy}} = \mathbf{u}_{k+1}^{\textsc{Sinc}} = \mathbf{u}_{k+1}^{\textsc{Gauss}}$.

	\item As discussed above, the methods based on the BURA provide the most accurate approximation across all scenarios and are specifically tailored towards the fractional parameter $s$. If, however, solutions to \eqref{eq:fraclaplace} for several values of $s$ are required, $\mathbf{u}^{\textsc{Sinc}}$, $\mathbf{u}^{\textsc{Zolo}}$, and $\mathbf{u}^{\textsc{Greedy}}$ outperform their competitors in terms of efficiency since they allow direct querying of the solution for arbitrary $s$ after an initial offline computation phase.
\end{itemize}

\begin{figure}[h!]
    \centering
    \includegraphics[width=0.62\textwidth]{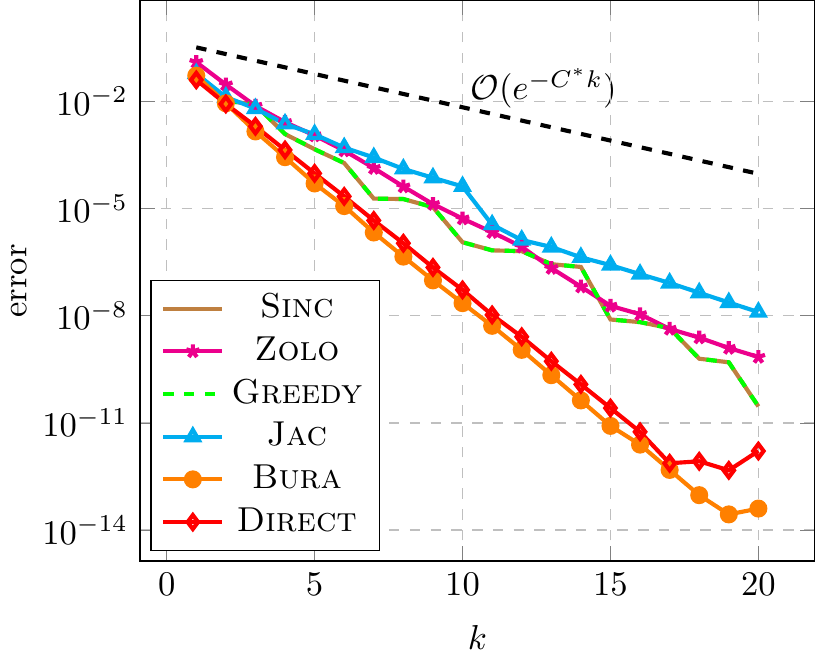}
    \caption{Discrete $L_2$-error $\norm{\mathbf{u} - \mathbf{u}_{k+1}}_{M}$, $M$ as in \eqref{eq:matrices}, for $s = 0.2$, where $\mathbf{u}$ is the discrete high-fidelity solution of \eqref{eq:fraclaplace} and $\mathbf{u}_{k+1}$ the solution obtained by the respective method.}
    \label{fig:convergence02}
\end{figure}
\begin{figure}[h!]
	\centering
	\includegraphics[width=0.62\textwidth]{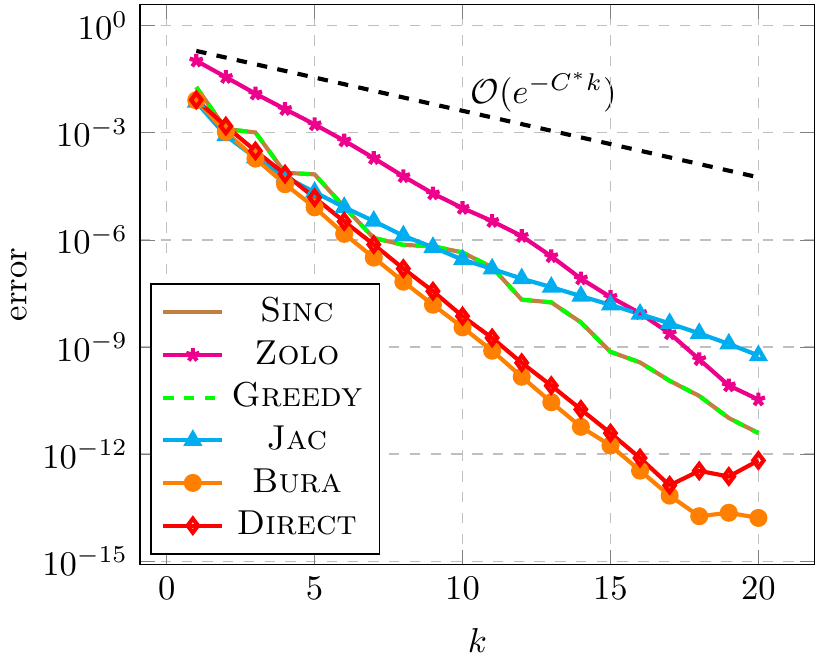}
	\caption{Discrete $L_2$-error $\norm{\mathbf{u} - \mathbf{u}_{k+1}}_{M}$, $M$ as in \eqref{eq:matrices}, for $s = 0.5$, where $\mathbf{u}$ is the discrete high-fidelity solution of \eqref{eq:fraclaplace} and $\mathbf{u}_{k+1}$ the solution obtained by the respective method.}
	\label{fig:convergence05}
\end{figure}
\begin{figure}[h!]
	\centering
	\includegraphics[width=0.62\textwidth]{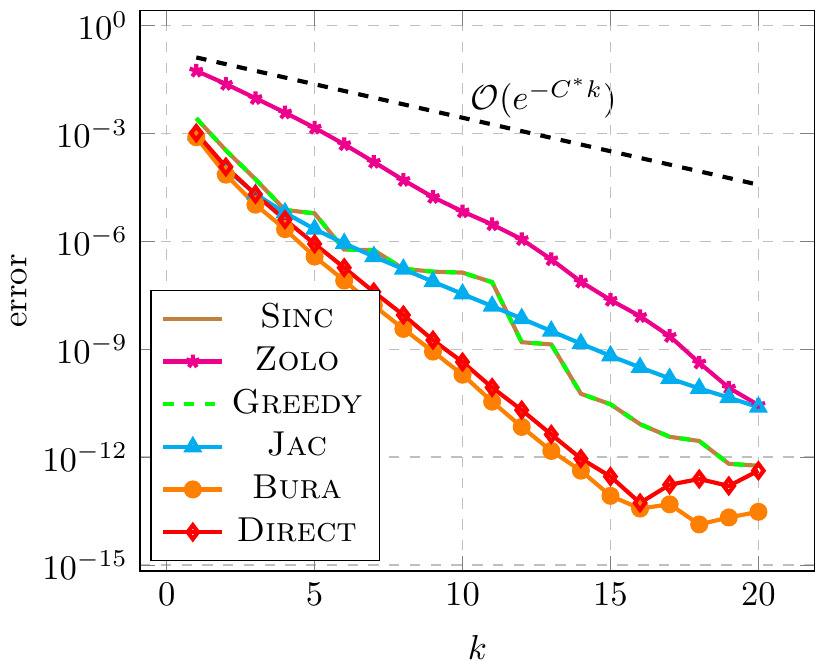}
	\caption{Discrete $L_2$-error $\norm{\mathbf{u} - \mathbf{u}_{k+1}}_{M}$, $M$ as in \eqref{eq:matrices}, for $s = 0.8$, where $\mathbf{u}$ is the discrete high-fidelity solution of \eqref{eq:fraclaplace} and $\mathbf{u}_{k+1}$ the solution obtained by the respective method.}
	\label{fig:convergence08}
\end{figure}

\section*{Acknowledgements}

The first author has been funded by the Austrian Science Fund (FWF) through grant number F 65 and W1245.
The second author has been partially supported by the Austrian Science Fund (FWF)
grant P 33956-NBL.

\bibliographystyle{plainnat}
\bibliography{references}

\end{document}